\theoremstyle{plain}
\newtheorem{theorem}{Theorem}[section]
\newtheorem*{theorem*}{Theorem}
\newtheorem{lemma}[theorem]{Lemma}
\newtheorem{prop}[theorem]{Proposition}
\newtheorem{cor}[theorem]{Corollary}
\newtheorem{rem}[theorem]{Remark}
\newtheorem{ex}[theorem]{Example}
\newtheorem*{mt*}{Main Theorem}
\newcommand\C{{\mathbb C}}
\newcommand{\del}{\partial}
\newcommand{\delbar}{\bar{\del}}
\newcommand{\pa}[1]{\left( #1 \right)}
\DeclareMathOperator{\id}{id}
\title[decompositions of $\delbar$-harmonic forms on almost-K\"ahler manifolds]{Primitive decompositions of Dolbeault harmonic forms on compact almost-K\"ahler manifolds}
\author[Andrea Cattaneo, Nicoletta Tardini and Adriano Tomassini]{Andrea Cattaneo, Nicoletta Tardini and Adriano Tomassini}
\address{Dipartimento di Scienze Matematiche, Fisiche e Informatiche\\
Unit\`{a} di Matematica e Informatica,
Universit\`{a} degli Studi di Parma\\
Parco Area delle Scienze 53/A, 43124 \\
Parma, Italy}
\email{andrea.cattaneo@unipr.it}
\email{nicoletta.tardini@gmail.com}
\email{nicoletta.tardini@unipr.it}
\email{adriano.tomassini@unipr.it}
\keywords{almost-complex; Hermitian metric; harmonic form; primitive form}
\thanks{\newline 
The first author is partially supported by GNSAGA of INdAM. The second author is partially supported by GNSAGA of INdAM and has financially been supported by the Programme ``FIL-Quota Incentivante'' of University of Parma and co-sponsored by Fondazione Cariparma. The third author is partially supported by the Project PRIN 2017 ``Real and Complex Manifolds: Topology, Geometry and holomorphic dynamics'' and by GNSAGA of INdAM}
\subjclass[2010]{53C15; 58A14; 58J05}
\begin{document}

\begin{abstract}
Let $(X,J,g,\omega)$ be a compact $2n$-dimensional almost-K\"ahler manifold. We prove primitive decompositions of $\del$-, $\delbar$-harmonic forms on $X$ in bidegree $(1,1)$ and $(n-1,n-1)$ (such bidegrees appear to be optimal). We provide examples showing that in bidegree $(1,1)$ the $\del$- and $\delbar$-decompositions differ.
\end{abstract}

\maketitle

\section{Introduction}

In complex geometry the Dolbeault cohomology plays a fundamental role in the study of complex manifolds and a classical way to compute it on compact complex manifolds is through the use of the associated spaces of harmonic forms. More precisely, if $X$ is a complex manifold, then the exterior derivative $d$ splits as $\del+\delbar$ and such operators satisfy $\delbar^2=\del^2=\del\delbar+\delbar\del=0$. Hence, one can define the Dolbeault cohomology and its conjugate as
$$
H^{\bullet,\bullet}_{\delbar}(X):=
\frac{\text{Ker}\,\delbar}{\text{Im}\,\delbar}\,,\qquad
H^{\bullet,\bullet}_{\del}(X):=
\frac{\text{Ker}\,\del}{\text{Im}\,\del}.
$$
If $X$ is compact and we fix an Hermitian metric, then it turns out that these spaces are isomorphic to the kernel of two suitable elliptic operators, $\Delta_{\delbar}$ and $\Delta_{\del}$ respectively. More precisely, denoting with  $\mathcal{H}^{\bullet,\bullet}_{\delbar}(X)$ and $\mathcal{H}^{\bullet,\bullet}_{\del}(X)$ the spaces of harmonic forms, they have a cohomological meaning, namely
$$
H^{\bullet,\bullet}_{\delbar}(X)\simeq
\mathcal{H}^{\bullet,\bullet}_{\delbar}(X)
\,,\qquad
H^{\bullet,\bullet}_{\del}(X)\simeq
\mathcal{H}^{\bullet,\bullet}_{\del}(X)
$$
and in particular their dimensions are holomorphic invariants.\\
Moreover, if the Hermitian metric is K\"ahler then by the K\"ahler identities it turns out that $\Delta_{\delbar}=\Delta_{\del}$ and in particular
$$
\mathcal{H}^{\bullet,\bullet}_{\delbar}(X)=\mathcal{H}^{\bullet,\bullet}_{\del}(X)
$$
giving therefore isomorphisms for the respective cohomologies, namely
$$
H^{\bullet,\bullet}_{\delbar}(X)\simeq
H^{\bullet,\bullet}_{\del}(X)\,.
$$
The integrability assumption on the complex structure is crucial in the proof of all these results.\\
Furthermore, a remarkable feature of K\"ahler geometry is that the primitive decomposition of differential forms passes to cohomology and leads to a primitive decomposition of de Rham cohomology (see e.g., \cite{weil}). K\"ahler geometry is at the crossroad of complex and symplectic geometry. From the symplectic point of view we recall that in \cite{tseng-yau} Tseng and Yau introduced natural cohomologies on (compact) symplectic manifolds, involving the symplectic co-differential and the exterior derivative, proving a primitive decomposition for them. \\
If $J$ is a non-integrable almost-complex structure on a $2n$-dimensional smooth manifold $X$, then the exterior derivative splits as $\mu+\del+\delbar+\bar\mu$ and in particular $\delbar^2\neq 0$. Hence, the standard Dolbeault cohomology and its conjugate are not well-defined. 
Recently, Cirici and Wilson in \cite{cirici-wilson-1}
gave a definition for the Dolbeault cohomology in the non-integrable setting considering also the operator $\bar\mu$ together with $\delbar$. Such cohomology groups might be infinite-dimensional on compact almost-complex manifolds as shown by \cite{coelho-placini-stelzig}.\\
On the other side, fixing an almost-Hermitian metric $g$ on $(X,J)$ one can develop an Hodge theory for harmonic forms on $(X,J,g)$ without a cohomological counterpart. More precisely, setting, similarly to the integrable case,
$$
\Delta_{\delbar}:=\delbar\delbar^*+\delbar^*\delbar\,,\qquad
\Delta_{\del}:=\del\del^*+\del^*\del
$$
it turns out that they are elliptic selfadjoint differential operators. Therefore, if $X$ is compact their kernels, denoted again with $\mathcal{H}^{\bullet,\bullet}_{\delbar}(X)$ and $\mathcal{H}^{\bullet,\bullet}_{\del}(X)$, are finite dimensional complex vector spaces. Holt and Zhang in \cite{holt-zhang} answered to a question of Kodaira and Spencer \cite{hirzebruch} showing that, contrarily to the complex case, the dimensions of the spaces of $\delbar$-harmonic $(0,1)$-forms on a $4$-dimensional manifold depend on the metric. Indeed they construct on the Kodaira-Thurston manifold an almost-complex structure that, with respect to different almost-Hermitian metrics, has varying $\text{dim}\,\mathcal{H}^{0,1}_{\delbar}$.
With different techniques in \cite{tardini-tomassini-dim4} it was shown that 
also the dimension of the space of $\delbar$-harmonic $(1,1)$-forms depend on the metric on $4$-dimensional manifolds (for other results in this direction see \cite{PT4} and \cite{holt}).\\
We note that explicit computations of $\delbar$-harmonic forms is a difficult task and not much is known in higher dimension (see \cite{tardini-tomassini-dim6}, \cite{cattaneo-nannicini-tomassini1}, \cite{CNT} for some detailed computations). \\
In the present paper we study the validity of primitive decompositions on compact almost-K\"ahler manifolds in any dimension.
 More precisely, in Propositions \ref{prop1: decomposition}, \ref{prop2: decomposition}, Theorem \ref{thm: decomposition} and Corollary \ref{cor: decomposition} we prove   on compact almost-K\"ahler $2n$-dimensional manifolds, primitive decompositions for $\delbar$- and $\del$-harmonic forms in bidegrees $(p,0)$, $(0,q)$, $(1,1)$, $(n,n-p)$, $(n-q,n)$ and $(n-1,n-1)$, with $p,q\leq n$. One cannot hope to have such decompositions for any bidegree as shown in Example \ref{ex:no-2-1-decomposition}. For similar results in the case of Bott-Chern harmonic forms we refer to \cite{piovani-tardini}.\\
We notice that, even though the metric is almost-K\"ahler the decompositions of $\delbar$- and $\del$-harmonic forms might differ. Indeed,  in Section \ref{section:relations-laplacians} we show explicitly that, differently from the K\"ahler case, one can have
$\Delta_{\delbar}\neq \Delta_{\del}$ and also
$$
\mathcal{H}^{1,1}_{\delbar}(X)\neq\mathcal{H}^{1,1}_{\del}(X)\,.
$$
We observe that a key ingredient in the proof of the results in \cite{tardini-tomassini-dim4} (see also \cite{holt-zhang}) is indeed the primitive decomposition of $\delbar$-harmonic $(1,1)$-forms on $4$-dimensional manifolds. In fact, in this dimension in Proposition \ref{prop:dim4-equality} we prove the general equality $\mathcal{H}^{1,1}_{\delbar}(X)=\mathcal{H}^{1,1}_{\del}(X)$.\\
All the examples we present are nilmanifolds, of dimensions $6$ and $8$, endowed with possibly non left-invariant almost-K\"ahler structures.\\
We recall that if one wants to mimic and recover all the K\"ahler identities the proper operator to consider is $\bar\delta:=\delbar+\mu$ (see \cite{cirici-wilson-2}, \cite{tardini-tomassini-diff-operators}). However, considering just
the operator $\delbar$ on almost-K\"ahler manifolds we are able to see how genuinely almost-K\"ahler manifolds differ from K\"ahler ones. More precisely, the study of the kernel of $\Delta_{\delbar}$ enlightens the purely almost-complex properties.

\medskip
\noindent{\sl Acknowledgments.}  The authors would like to thank Riccardo Piovani for several discussions on the subject. The authors express their gratitude to the anonymous referee for the improvements made after their review.

\section{Preliminaries}
\label{preliminaries}
In this Section we recall some basic facts about almost-complex and almost-Hermitian manifolds and fix some notations.
Let $X$ be a smooth manifold of dimension $2n$ and let $J$ be an almost-complex structure on $X$, namely a $(1,1)$-tensor on $X$ such that $J^2=-\id$. Then $J$ induces on the space of forms $A^\bullet(X)$ a natural bigrading, namely
$$
A^\bullet(X)=\bigoplus_{p+q=\bullet}A^{p,q}(X)\,.
$$
Accordingly, the exterior derivative $d$ splits into four operators
$$
d:A^{p,q}(X)\to A^{p+2,q-1}(X)\oplus A^{p+1,q}(X)\oplus A^{p,q+1}(X)\oplus A^{p-1,q+2}(X)
$$
$$
d=\mu+\del+\delbar+\bar\mu\,,
$$
where $\mu$ and $\bar\mu$ are differential operators that are linear over functions. In particular, they are related to the Nijenhuis tensor $N_J$ by
$$
\left(\mu\alpha+\bar\mu\alpha\right)(u,v)=\frac{1}{4} \alpha\left(N_J(u,v)\right)
$$
where $\alpha\in A^1(X)$. Hence, $J$ is integrable, that is $J$ induces a complex structure on $X$, if and only if $\mu=\bar\mu=0$.\\
In general, since $d^2=0$ one has
$$
\left\lbrace
\begin{array}{lcl}
\mu^2 & =& 0\\
\mu\del+\del\mu & = & 0\\
\del^2+\mu\delbar+\delbar\mu & = & 0\\
\del\delbar+\delbar\del+\mu\bar\mu+\bar\mu\mu & = & 0\\
\delbar^2+\bar\mu\del+\del\bar\mu & = & 0\\
\bar\mu\delbar+\delbar\bar\mu & = & 0\\
\bar\mu^2 & =& 0
\end{array}
\right.\,.
$$
In particular, $\delbar^2\neq 0$ and so the Dolbeault cohomology of $X$
$$
H^{\bullet,\bullet}_{\delbar}(X):=
\frac{\text{Ker}\,\delbar}{\text{Im}\,\delbar}
$$
is well defined if and only if $J$ is integrable. The same holds for the operator $\del$.\\
If $g$ is an Hermitian metric on $(X,J)$ with fundamental form $\omega$ and $*$ is the associated $\mathbb{C}$-linear Hodge-$*$-operator, one can consider the adjoint operators
\begin{equation*}
d^*=-*d*,\ \ \
\mu^*=-*\bar\mu*,\ \ \ \del^*=-*\delbar*,\ \ \ \delbar^*=-*\del*,\ \ \ \bar\mu^*=-*\mu*,
\end{equation*}
and for $D\in\left\lbrace d,\del,\delbar,\mu,\bar\mu\right\rbrace$ one defines the
associated Laplacians
$$
\Delta_D:=DD^*+D^*D
$$
and we will denote the kernel by
$$
\mathcal{H}^{p,q}_{D}(X):=\text{Ker}\,\Delta_{D_{\vert A^{p,q}(X)}}\,.
$$
These spaces will be called the spaces of $D$-harmonic forms.
The operators $\Delta_{\delbar}$ and $\Delta_{\del}$
are second order, elliptic, differential operators and in particular, if $X$ is compact, the associated spaces of harmonic forms are finite-dimensional and their dimensions will be denoted by $h^{p,q}_{\delbar}$ and $h^{p,q}_{\del}$.\\
If $X$ is compact, then we easily deduce the following relations for a $(p,q)$-form $\alpha$,
\begin{equation*}
\begin{cases}
\Delta_{\del}\,\alpha=0\ &\iff\ \del\alpha=0,\ \delbar*\alpha=0,\\
\Delta_{\delbar}\,\alpha=0\ &\iff\ \delbar\alpha=0,\ \del*\alpha=0,
\end{cases}
\end{equation*}
which characterize the spaces of harmonic forms.

\section{Primitive decompositions of Dolbeault harmonic forms}\label{main}
Let $(X,J,g,\omega)$ be a $2n$-dimensional almost-Hermitian manifold. 
We denote with
$$
L:\Lambda^kX\to\Lambda^{k+2}X\,,\quad \alpha\mapsto\omega\wedge\alpha
$$
the Lefschetz operator and with
$$
\Lambda:\Lambda^kX\to\Lambda^{k-2}X\,,\quad \Lambda=-*L*
$$
its dual.
A $k$-form $\alpha_k$ on $X$, for $k\leq n$, is said to be {\em primitive} if $\Lambda\alpha_k=0$, or equivalently $L^{n-k+1}\alpha_k=0$. Then, the following vector bundle decomposition holds (see e.g., \cite{weil}) 
$$
\Lambda^kX=\bigoplus_{r\geq\max(k-n,0)}L^r(P^{k-2r}X),
$$
where 
$$
P^{s}X:=\ker\big(\Lambda:\Lambda^{s}X\to\Lambda^{s-2}X\big)
$$
is the bundle of $s$-primitive forms. Accordingly, given any $k$-form $\alpha_k$ on $X$, we can write 
\begin{equation}\label{primitive-bundle-decomposition}
\alpha_k=\sum_{r\geq\max(k-n,0)}\frac{1}{r!}L^r\beta_{k-2r},
\end{equation}
where $\beta_{k-2r}\in\Gamma(P^{k-2r}X)$, that is 
$$\Lambda\beta_{k-2r} =0,
$$ 
or equivalently 
$$
L^{n-k+2r+1}\beta_{k-2r}=0.
$$
Furthermore, the decomposition above is compatible with the bidegree decomposition on the bundle of complex $k$-forms $\Lambda_\C^kX$ induced by $J$, that is 
$$
P_\C^kX=\bigoplus_{p+q=k}P^{p,q}X,
$$
where 
$$
P^{p,q}X=P^k_\C X\cap\Lambda^{p,q}X
$$
For any given $\beta_k\in P^kX$, we have the following formula (cf. \cite[p. 23, Th\'eor\`eme 2]{weil})
\begin{equation}\label{*-primitive}
*L^r\beta_k=(-1)^{\frac{k(k+1)}{2}}\frac{r!}{(n-k-r)!}L^{n-k-r}J\beta_k.
\end{equation}
In the sequel we will denote $P^{\bullet}=P^{\bullet}X$ and so on.\\
We recall that by \cite[Corollary 5.4]{cirici-wilson-2} such decompositions in primitive forms pass to the spaces of $d$-harmonic forms whenever there exists an almost-K\"ahler metric. More precisely, if $(X,J,\omega)$ is a compact $2n$-dimensional almost-K\"ahler manifold, then, for every $p,q$,
$$
\mathcal{H}_d^{p,q}(X)=\bigoplus_{r\geq\max(k-n,0)}L^r(
\mathcal{H}_d^{p-r,q-r}(X)\cap P^{p-r,q-r}).
$$
In fact, this holds also for the spaces of $\bar\delta$- and $\delta$-harmonic forms introduced in \cite{tardini-tomassini-diff-operators}, where $\bar\delta:=\delbar+\mu$ and $\delta:=\del+\bar\mu$. Indeed, by \cite[Proposition 6.2, Theorem 6.7]{tardini-tomassini-diff-operators}, one has $$
\mathcal{H}_d^{p,q}(X)=\mathcal{H}_{\bar\delta}^{p,q}(X)=
\mathcal{H}_{\delta}^{p,q}(X)\,.
$$
In the following, we are going to study such decompositions for $\delbar$-harmonic forms. First, notice that, since $(p,0)$-forms and $(0,q)$-forms are trivially primitive, we immediately derive the following
\begin{prop}\label{prop1: decomposition}
Let $(X,J,g,\omega)$ be a compact $2n$-dimensional almost-Hermitian manifold (with $n \geq 2$). Then the following decompositions hold for every $p,q\leq n$
\begin{align*}
\mathcal{H}^{p,0}_{\delbar}&=\mathcal{H}^{p,0}_{\delbar}\cap P^{p,0},\ \ \ &\mathcal{H}^{0,q}_{\delbar}&=\mathcal{H}^{0,q}_{\delbar}\cap P^{0,q},\\
\mathcal{H}^{p,0}_{\del}&=\mathcal{H}^{p,0}_{\del}\cap P^{p,0},\ \ \ &\mathcal{H}^{0,q}_{\del}&=\mathcal{H}^{0,q}_{\del}\cap P^{0,q}.
\end{align*}
\end{prop}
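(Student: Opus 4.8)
The plan is to observe that the four stated equalities are immediate consequences of a single bidegree computation: every form of bidegree $(p,0)$ or $(0,q)$ (with $p,q\le n$) is automatically primitive. Once this is established, the harmonic condition plays no further role, since it only guarantees that the relevant harmonic space sits inside $A^{p,0}(X)$ or $A^{0,q}(X)$.

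First I would record how $\Lambda$ acts on bidegrees. Since $L$ raises the bidegree by $(1,1)$, because $\omega\in A^{1,1}(X)$, and the $\C$-linear Hodge operator $*$ sends $\Lambda^{p,q}X$ to $\Lambda^{n-q,n-p}X$, the defining identity $\Lambda=-*L*$ gives, tracking bidegrees, $\Lambda^{p,q}X\xrightarrow{*}\Lambda^{n-q,n-p}X\xrightarrow{L}\Lambda^{n-q+1,n-p+1}X\xrightarrow{*}\Lambda^{p-1,q-1}X$. Hence $\Lambda$ maps $\Lambda^{p,q}X$ into $\Lambda^{p-1,q-1}X$.

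Applying this to a $(p,0)$-form $\alpha$ with $p\le n$, the image $\Lambda\alpha$ lands in $\Lambda^{p-1,-1}X=0$, so $\Lambda\alpha=0$ and $\alpha$ is primitive; the same argument applied to a $(0,q)$-form yields $\Lambda\alpha\in\Lambda^{-1,q-1}X=0$. Thus $A^{p,0}(X)$ and $A^{0,q}(X)$ consist entirely of primitive forms, i.e. they coincide with the spaces of sections of $P^{p,0}$ and $P^{0,q}$ respectively. Since $\mathcal{H}^{p,0}_{\delbar}\subseteq A^{p,0}(X)$ and $\mathcal{H}^{0,q}_{\delbar}\subseteq A^{0,q}(X)$, and likewise for $\del$, each of the four harmonic spaces is automatically contained in the corresponding primitive bundle, which is exactly the asserted equalities.

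I do not expect any real obstacle here: the content of the statement is simply that the extreme "row" and "column" of the bidegree diagram are made up of primitive forms, and the only thing to verify is the bidegree bookkeeping for $\Lambda$ above. I would note in passing that no almost-K\"ahler hypothesis intervenes in this argument, which is consistent with the proposition being stated for an arbitrary compact almost-Hermitian manifold; the more substantial decompositions in the remaining bidegrees, where genuinely nontrivial primitive components appear, are where the almost-K\"ahler condition will be needed.
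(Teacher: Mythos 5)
Your proof is correct and takes essentially the same route as the paper, which simply observes that $(p,0)$- and $(0,q)$-forms are trivially primitive and derives the decompositions immediately. Your explicit verification that $\Lambda$ lowers bidegree by $(1,1)$, so that it annihilates the extreme row and column of the bidegree diagram, is exactly the bookkeeping the paper leaves implicit.
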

By applying to such decompositions the Hodge-$*$-operator and formula (\ref{*-primitive}), we obtain the following
\begin{prop}\label{prop2: decomposition}
Let $(X,J,g,\omega)$ be a compact $2n$-dimensional almost-Hermitian manifold (with $n \geq 2$). Then the following decompositions hold for every $p,q\leq n$
\begin{align*}
\mathcal{H}^{n,n-p}_{\delbar}&=L^{n-p}\left(\mathcal{H}^{p,0}_{\del}\cap P^{p,0}\right),\ \ \ &\mathcal{H}^{n-q,n}_{\delbar}&=L^{n-q}\left(\mathcal{H}^{0,q}_{\del}\cap P^{0,q}\right),\\
\mathcal{H}^{n,n-p}_{\del}&=L^{n-p}\left(\mathcal{H}^{p,0}_{\delbar}\cap P^{p,0}\right),\ \ \ &\mathcal{H}^{n-q,n}_{\del}&=L^{n-q}\left(\mathcal{H}^{0,q}_{\delbar}\cap P^{0,q}\right).
\end{align*}
\end{prop}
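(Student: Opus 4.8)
The plan is to derive all four identities from a single principle: the Hodge-$*$-operator interchanges $\del$- and $\delbar$-harmonicity and shifts bidegrees by $*\colon A^{p,q}\to A^{n-q,n-p}$, after which $*$ on primitive forms is rewritten through \eqref{*-primitive}. I will carry out the first identity in full and indicate why the remaining three are identical up to relabeling.

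First I would invoke the harmonicity characterizations from the end of Section \ref{preliminaries}: for $\alpha\in A^{p,0}$, $\Delta_{\del}\alpha=0$ is equivalent to $\del\alpha=0$ and $\delbar*\alpha=0$, whereas for $\beta\in A^{n,n-p}$, $\Delta_{\delbar}\beta=0$ is equivalent to $\delbar\beta=0$ and $\del*\beta=0$. So if $\alpha\in\mathcal{H}^{p,0}_{\del}$ and $\beta=*\alpha\in A^{n,n-p}$, then $\delbar\beta=\delbar*\alpha=0$ at once, and using $**=\pm\,\id$ on forms in real dimension $2n$ we get $\del*\beta=\pm\,\del\alpha=0$; hence $\beta\in\mathcal{H}^{n,n-p}_{\delbar}$. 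The symmetric computation shows that $*$ carries $\mathcal{H}^{n,n-p}_{\delbar}$ into $\mathcal{H}^{p,0}_{\del}$, so $*$ restricts to an isomorphism $\mathcal{H}^{p,0}_{\del}\cong\mathcal{H}^{n,n-p}_{\delbar}$. I want to emphasize that this step is purely formal, relying only on $\del^*=-*\delbar*$, $\delbar^*=-*\del*$ and $**=\pm\,\id$; none of these uses integrability, so the argument is untouched by the presence of $\mu,\mubar$.

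Next I would make the isomorphism explicit. By Proposition \ref{prop1: decomposition} we have $\mathcal{H}^{p,0}_{\del}=\mathcal{H}^{p,0}_{\del}\cap P^{p,0}$, so each such $\alpha$ is primitive of degree $k=p$; applying \eqref{*-primitive} with $k=p$, $r=0$ and the Weil action $J\alpha=i^{p}\alpha$ on $(p,0)$-forms gives
\begin{equation*}
*\alpha=(-1)^{\frac{p(p+1)}{2}}\,\frac{i^{p}}{(n-p)!}\,L^{n-p}\alpha,
\end{equation*}
so on $P^{p,0}$ the operator $*$ is a nonzero scalar times $L^{n-p}$. Combining this with the isomorphism above yields
\begin{equation*}
\mathcal{H}^{n,n-p}_{\delbar}=*\bigl(\mathcal{H}^{p,0}_{\del}\cap P^{p,0}\bigr)=L^{n-p}\bigl(\mathcal{H}^{p,0}_{\del}\cap P^{p,0}\bigr),
\end{equation*}
the fixed scalar not affecting the resulting subspace. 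The remaining three identities follow verbatim: for the second I replace $(p,0)$ by $(0,q)$ and use $J\alpha=i^{-q}\alpha$ with $k=q$, $r=0$; for the third and fourth I start instead from $\delbar$-harmonic forms, so that $*$ now sends $\delbar$-harmonicity to $\del$-harmonicity. I do not expect a genuine obstacle here: the only care needed is tracking the sign $(-1)^{p(p+1)/2}$ and the unit $i^{\pm p}$ coming from the Weil operator, and noting that the constant in \eqref{*-primitive} is nonzero (as $n-p\ge 0$), so that $L^{n-p}$ reproduces the subspace exactly rather than merely up to isomorphism.
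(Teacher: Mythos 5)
Your proof is correct and follows essentially the same route as the paper: the paper's (one-line) argument is precisely to apply the Hodge-$*$-operator to the decompositions of Proposition \ref{prop1: decomposition} and invoke formula \eqref{*-primitive}, which is what you carry out in detail. Your explicit verification that $*$ interchanges $\del$- and $\delbar$-harmonicity via the compact-case characterizations, and that $*=c\,L^{n-p}$ (with $c\neq 0$) on $P^{p,0}$, simply makes the paper's implicit steps explicit.
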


As a consequence, we derive the following
\begin{cor}
Let $(X,J,g,\omega)$ be a compact $2n$-dimensional almost-Hermitian manifold (with $n \geq 2$). Then,
$$
\mathcal{H}^{n,0}_{\delbar}=\mathcal{H}^{n,0}_{\del}
\qquad
\text{and}
\qquad
\mathcal{H}^{0,n}_{\delbar}=\mathcal{H}^{0,n}_{\del}.
$$
\end{cor}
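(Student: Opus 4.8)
The plan is to obtain this as the extremal case of Proposition \ref{prop2: decomposition}, specializing the exponent of the Lefschetz operator to zero. Indeed, the first family of identities in Proposition \ref{prop2: decomposition} reads $\mathcal{H}^{n,n-p}_{\delbar}=L^{n-p}(\mathcal{H}^{p,0}_{\del}\cap P^{p,0})$ and $\mathcal{H}^{n,n-p}_{\del}=L^{n-p}(\mathcal{H}^{p,0}_{\delbar}\cap P^{p,0})$; the point is that setting $p=n$ collapses $L^{n-p}$ to the identity, so that no nontrivial Lefschetz powers survive.

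First I would substitute $p=n$ into the $\delbar$-equation, obtaining $\mathcal{H}^{n,0}_{\delbar}=L^0(\mathcal{H}^{n,0}_{\del}\cap P^{n,0})=\mathcal{H}^{n,0}_{\del}\cap P^{n,0}$. Next I would record that every $(n,0)$-form is automatically primitive: since $\omega$ has bidegree $(1,1)$, for an $(n,0)$-form $\alpha$ the form $L\alpha=\omega\wedge\alpha$ lies in bidegree $(n+1,1)$, which vanishes identically on a manifold of complex dimension $n$, so the primitivity criterion $L^{n-k+1}\alpha=0$ with $k=n$ holds trivially. This is precisely the content of the $(p,0)$-decomposition in Proposition \ref{prop1: decomposition}, which gives $\mathcal{H}^{n,0}_{\del}\cap P^{n,0}=\mathcal{H}^{n,0}_{\del}$. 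Combining the two displays yields $\mathcal{H}^{n,0}_{\delbar}=\mathcal{H}^{n,0}_{\del}$.

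For the second equality I would repeat the argument with the conjugate family, setting $q=n$ in $\mathcal{H}^{n-q,n}_{\delbar}=L^{n-q}(\mathcal{H}^{0,q}_{\del}\cap P^{0,q})$. Again $L^{n-q}=L^0=\id$, and $(0,n)$-forms are primitive by the $(0,q)$-decomposition in Proposition \ref{prop1: decomposition}, so that $\mathcal{H}^{0,n}_{\delbar}=\mathcal{H}^{0,n}_{\del}\cap P^{0,n}=\mathcal{H}^{0,n}_{\del}$.

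I do not anticipate any genuine obstacle: the whole statement is the boundary case $r=0$ of the already-established primitive decompositions. The only thing to verify is the harmless bookkeeping that at bidegree $(n,0)$ (respectively $(0,n)$) the primitive component is the full space, i.e. that intersecting with $P^{n,0}$ (respectively $P^{0,n}$) imposes no condition, which is exactly Proposition \ref{prop1: decomposition}.
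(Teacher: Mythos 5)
Your proposal is correct and follows exactly the paper's own (primary) argument: the authors also obtain the statement by setting $p=n$ and $q=n$ in Proposition \ref{prop2: decomposition}, with the intersection against $P^{n,0}$ (resp.\ $P^{0,n}$) being vacuous since such forms are automatically primitive. The paper additionally sketches a second, direct argument via formula \eqref{*-primitive} (namely $*\alpha=c_n\alpha$ on $(n,0)$-forms, so $\delbar\alpha=0\iff\delbar*\alpha=0$), but your route is the one the authors cite first and is complete as written.
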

\begin{proof}
This follows taking $p=n$ and $q=n$ in Proposition \ref{prop2: decomposition}. Otherwise, it can be proved directly. Indeed, let $\alpha$ be a $(n,0)$-form (the case $(0,n)$ is similar), then $\alpha$ is primitive and
by Formula (\ref{*-primitive}), $*\alpha=c_n\,\alpha$, with $c_n\neq 0$ a constant depending only on the dimension of $X$. Therefore, for bidegree reasons,
$$
\alpha\in\mathcal{H}^{n,0}_{\delbar}\iff
\delbar\alpha=0\iff
\delbar*\alpha=0\iff
\alpha\in\mathcal{H}^{n,0}_{\del}.
$$
\end{proof}

We show now that primitive decompositions hold also in other suitable degrees as soon as we assume the existence of an almost-K\"ahler metric.

\begin{theorem}\label{thm: decomposition}
Let $(X,J,g,\omega)$ be a compact $2n$-dimensional almost-K\"ahler manifold (with $n \geq 2$). Then the following decompositions hold:
\[
\mathcal{H}^{1,1}_{\delbar}=\C\cdot\omega \oplus \pa{ \mathcal{H}^{1,1}_{\delbar}\cap P^{1,1}}
\]
\end{theorem}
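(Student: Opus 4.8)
My plan is to reduce the statement to the single claim that, in the primitive decomposition $\alpha=f\,\omega+\beta$ of a $\delbar$-harmonic $(1,1)$-form (with $\beta\in\Gamma(P^{1,1})$ and $f$ a smooth $\C$-valued function), the function $f$ is constant. Granting this, $\beta=\alpha-f\omega$ is primitive and, since both $\alpha$ and $\omega$ are $\delbar$-harmonic, so is $\beta$; thus $\beta\in\mathcal{H}^{1,1}_{\delbar}\cap P^{1,1}$, and the sum is direct because $\Lambda\omega=n\neq0$ forces $\C\cdot\omega\cap P^{1,1}=0$. That $\omega$ is $\delbar$-harmonic is immediate: $d\omega=0$ splits by bidegree into $\mu\omega=\del\omega=\delbar\omega=\bar\mu\omega=0$, so $\delbar\omega=0$, while $\del*\omega=\tfrac{1}{(n-1)!}\del(\omega^{n-1})=0$ by the same splitting of $d(\omega^{n-1})=0$, so $\delbar^*\omega=0$.

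The key input will be the behaviour of $\Delta_{\delbar}$ on the non-primitive summand. I would first record the identity $[\Lambda,\delbar]=-i\del^*$, obtained as the pure $(-1,0)$-component of the almost-K\"ahler identity $[\Lambda,\delbar+\mu]=-i(\del+\bar\mu)^*$ of \cite{cirici-wilson-2} (the $(1,-2)$-component giving $[\Lambda,\mu]=-i\bar\mu^*$); its adjoint is $[\delbar^*,L]=i\del$. Using $\del\omega=0$ and $\Lambda\omega=n$ one computes $\delbar^*(f\omega)=i\,\del f$ (equivalently via (\ref{*-primitive})), and then, with $\delbar(f\omega)=L\delbar f$ and $[\delbar^*,L]=i\del$,
\[
\Delta_{\delbar}(f\omega)=L\,\Delta_{\delbar}f+i\pa{\delbar\del f+\del\delbar f}.
\]
The crucial point is that the correction term vanishes: applying the relation $\del\delbar+\delbar\del+\mu\bar\mu+\bar\mu\mu=0$ to $f$ and using $\mu f=\bar\mu f=0$ yields $\delbar\del f+\del\delbar f=0$, so that $\Delta_{\delbar}(f\omega)=(\Delta_{\delbar}f)\,\omega$.

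With this lemma the argument closes quickly. From $\Delta_{\delbar}\alpha=0$ we obtain $\Delta_{\delbar}\beta=-\Delta_{\delbar}(f\omega)=-L\,\Delta_{\delbar}f$, which lies in $\Imm L$ and is therefore orthogonal to the primitive form $\beta$; hence $0=\langle\Delta_{\delbar}\beta,\beta\rangle=\|\delbar\beta\|^2+\|\delbar^*\beta\|^2$, so $\beta$ is itself $\delbar$-harmonic. Then $\delbar\alpha=0$ together with $\delbar\omega=0$ gives $\delbar\beta=-L\delbar f$, so $\delbar\beta=0$ forces $\delbar f=0$ (as $L$ is injective on $1$-forms), while $\delbar^*\alpha=0$ together with $\delbar^*(f\omega)=i\del f$ gives $\delbar^*\beta=-i\del f$, so $\delbar^*\beta=0$ forces $\del f=0$. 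Since $\mu f=\bar\mu f=0$, this means $df=0$, i.e. $f$ is constant, as required.

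The main obstacle is precisely the presence of the non-integrability operators $\mu,\bar\mu$, which a priori obstruct every appeal to a K\"ahler identity. The argument succeeds because they drop out exactly where they are needed: from $[\Lambda,\delbar]$ for bidegree reasons, and from $\Delta_{\delbar}(f\omega)$ because $\mu$ and $\bar\mu$ annihilate functions, so that the non-primitive piece $f\omega$ behaves as in the integrable case. I would check the identity $\Delta_{\delbar}(f\omega)=(\Delta_{\delbar}f)\omega$ and the sign in $\delbar^*(f\omega)=i\del f$ with particular care, as this is where the almost-K\"ahler hypothesis genuinely enters; the breakdown of an analogue of this computation in higher primitive degree is what makes a decomposition fail for general bidegree, as in Example \ref{ex:no-2-1-decomposition}.
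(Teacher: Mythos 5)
Your argument is correct, and it takes a genuinely different route from the paper's. Both proofs start from the same decomposition $\alpha_{1,1}=f\omega+\beta_{1,1}$ with $\beta_{1,1}$ primitive and reduce the statement to the constancy of $f$, but they get there differently. The paper works directly with the two equations $\delbar\alpha_{1,1}=0$ and $\del*\alpha_{1,1}=0$: it wedges them with suitable powers of $\omega$, combines them via $d^c=i(\delbar-\del+\mu-\bar\mu)$, applies $d^c$ once more to arrive at $\del\delbar\pa{f\,\omega^{n-1}}=0$, and then invokes a Gauduchon-type maximum-principle theorem (\cite{gauduchon}, \cite[Theorem 10]{angella-istrati-otiman-tardini}) to conclude that $f$ is constant. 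You instead import the Cirici--Wilson almost-K\"ahler identity, extract its bidegree $(-1,0)$ component $[\Lambda,\delbar]=-i\del^*$ (equivalently $[\delbar^*,L]=i\del$), and use the vanishing of $\mu,\bar\mu$ on functions to prove the commutation rule $\Delta_{\delbar}(f\omega)=(\Delta_{\delbar}f)\,\omega$; the $L^2$-orthogonality of $\Imm L$ against primitive forms then forces $\beta_{1,1}\in\mathcal{H}^{1,1}_{\delbar}$, and constancy of $f$ follows from $\delbar f=0$ (injectivity of $L$ on $1$-forms, which needs $n\geq 2$) and $\del f=0$, i.e.\ $df=0$, with no appeal to a maximum principle. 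The trade-off is clear: the paper's computation is self-contained at the level of identities (only wedge manipulations and the consequences of $d^2=0$) but requires nontrivial analytic input at the final step, while yours requires the nontrivial algebraic input of the almost-K\"ahler identities of \cite{cirici-wilson-2} and then closes by elementary means. Your route also isolates more transparently where the hypotheses enter: the correction term $i(\del\delbar+\delbar\del)f$ vanishes only because $\mu$ and $\bar\mu$ annihilate functions, which is precisely the mechanism unavailable for the higher primitive components in bidegree $(2,1)$, consistent with Example \ref{ex:no-2-1-decomposition}.
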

\begin{proof} Let $\alpha_{1, 1} \in A^{1, 1}(X)$. Then the primitive decomposition \eqref{primitive-bundle-decomposition} reads as 
\begin{equation}\label{primitive-forms-decomposition}
 \alpha_{1, 1} = \beta_{1, 1} + \beta \omega,
\end{equation}
where 
\begin{equation}
\beta_{1,1}\in A^{1,1}(X),\qquad \beta_{1,1}\wedge\omega^{n-1}=0,\quad\beta\in\mathcal{C}^\infty(X;\C) 
\end{equation}
The form $\alpha_{1, 1}$ belongs to $\mathcal{H}^{1, 1}_{\delbar}$ if and only if $\alpha_{1, 1}$ satisfies the following equations 
\begin{equation}\label{harmonic1-1}
\delbar\alpha_{1, 1}=0, \quad
\del*\alpha_{1, 1}=0.
\end{equation}

By \eqref{*-primitive} we compute 
\begin{equation}\label{*-primitive-n-1}
*\alpha_{1, 1} = -\frac{1}{(n - 2)!} \beta_{1, 1} \wedge \omega^{n - 2} + \beta \frac{1}{(n - 1)!}\omega^{n - 1}.
\end{equation}
Therefore, by \eqref{primitive-forms-decomposition}, \eqref{*-primitive-n-1}, taking into account that $g$ is almost-K\"ahler, equations \eqref{harmonic1-1} are equivalent to 
\begin{equation}\label{system-harmonic}
\left\{ 
\begin{array}{rl}
 \delbar \beta_{1, 1} + \delbar \beta \wedge \omega & = 0\\
 -\frac{1}{(n - 2)!} \del \beta_{1, 1} \wedge \omega^{n - 2} + \del \beta \wedge \frac{1}{(n - 1)!}\omega^{n - 1}& = 0.
 \end{array}
\right.
\end{equation}
After multiplying the first equation by $\omega^{n - 2}$, and the second by $(n - 2)!$, we obtain
$$
\left\{ 
\begin{array}{rl}
 \delbar \beta_{1, 1} \wedge \omega^{n - 2} + \delbar \beta \wedge \omega^{n - 1} & = 0\\
 -\del \beta_{1, 1} \wedge \omega^{n - 2} + \frac{1}{n - 1}\del \beta \wedge \omega^{n - 1} & = 0,
 \end{array}
\right.
$$
and taking the sum of the last two equations we obtain
$$
(\delbar \beta_{1, 1} - \del \beta_{1, 1}) \wedge \omega^{n - 2} + (\delbar \beta + \frac{1}{n - 1} \del \beta) \wedge    \omega^{n - 1} = 0.
$$
By definition, we have
$$
d^c=i(\delbar-\del+\mu-\overline{\mu}),
$$
where $\vert\mu\vert=(2,-1)$, $\vert\overline{\mu}\vert=(-1,2)$. Consequently, the last equation can be written as 
$$
(\delbar \beta + \frac{1}{n - 1}\del \beta) \wedge \omega^{n - 1} = i d^c \beta_{1, 1} \wedge \omega^{n - 2}.
$$
Applying $-id^c$ to both sides of the above equation, we obtain
$$
\left[ (\delbar - \del + \mu - \bar{\mu}) (\delbar \beta + \frac{1}{n - 1} \del \beta) \right] \wedge \omega^{n - 1} = 0,
$$
which yields
$$
(\frac{1}{n - 1}+ 1) \del \delbar \beta \wedge \omega^{n - 1} = 0,
$$
since $\del \delbar + \delbar \del = 0$ on functions and the other contributions vanish by bidegree reasons when we take take the wedge product with $\omega^{n-1}$. Therefore,
$$
\del \delbar \pa{\beta \cdot \omega^{n-1}} = 0
$$
from which we derive that $\beta \equiv \beta_0 \in \C$ is constant (see for instance \cite{gauduchon}, \cite[Theorem 10]{angella-istrati-otiman-tardini} or \cite[Proposition 3.4]{tardini-tomassini-dim4} for the $4$-dimensional case). Hence 
\[\alpha_{1, 1} = \beta_{1, 1} + \beta_0 \omega\]
so that from (\ref{system-harmonic}) or from
$$
\begin{array}{lllll}
\delbar\beta_{1,1} & = & \delbar \alpha_{1, 1} - \delbar(\beta_0 \omega) & = & 0\\
\del *\beta_{1,1} & = & \del*\alpha_{1, 1} - \del *(\beta_0 \omega) & = & 0,
\end{array}
$$
we have $\beta \in \mathcal{H}^{1, 1}_{\delbar}$ and $\beta_{1,1}$ is primitive. This proves that
$$
\mathcal{H}^{1, 1}_{\delbar} \subset \C \cdot \omega \oplus \pa{ \mathcal{H}^{1, 1}_{\delbar} \cap P^{1, 1}}.
$$

Conversely, if $\alpha_{1, 1} = \beta_0 \omega + \beta_{1, 1}$, with $\beta_0 \in \C$ and $\beta_{1,1} \in \mathcal{H}^{1, 1}_{\delbar} \cap P^{1, 1}$ we easily conclude that $\del *\alpha_{1, 1} = 0$ and that $\delbar \alpha_{1, 1} = 0$. The decomposition i) is thus proved.
\end{proof}


As a consequence we obtain the following primitive decompositions.

\begin{cor}\label{cor: decomposition}
Let $(X,J,g,\omega)$ be a compact $2n$-dimensional almost-K\"ahler manifold (with $n \geq 2$). Then the following decompositions hold:
\begin{enumerate}[i.]
 \item $\displaystyle \mathcal{H}^{1, 1}_{\del} = \C \cdot \omega \oplus \pa{ \mathcal{H}^{1, 1}_{\del}\cap P^{1,1}}$,
 \item $\displaystyle \mathcal{H}^{n - 1, n - 1}_{\delbar} = \C \omega^{n - 1} \oplus L^{n - 2} \pa{\mathcal{H}^{1, 1}_{\del} \cap P^{1, 1}}$,
 \item $\displaystyle \mathcal{H}^{n - 1, n - 1}_{\del} = \C \omega^{n - 1} \oplus L^{n - 2} \pa{\mathcal{H}^{1, 1}_{\delbar} \cap P^{1, 1}}$.
\end{enumerate}
\end{cor}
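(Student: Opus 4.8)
The plan is to obtain all three decompositions from the results already in hand, so that no new differential equation has to be solved. Decomposition (i) is the exact analogue of Theorem \ref{thm: decomposition} with the roles of $\del$ and $\delbar$ interchanged, while (ii) and (iii) follow from (i) and from Theorem \ref{thm: decomposition} by applying the Hodge-$*$-operator, exactly in the way Proposition \ref{prop2: decomposition} was deduced from Proposition \ref{prop1: decomposition}.

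For (i), rather than repeating the computation of Theorem \ref{thm: decomposition} verbatim, I would use complex conjugation. Since $\overline{\del\gamma}=\delbar\bar\gamma$ and the Hodge-$*$-operator is $\C$-linear, hence real (so $\overline{*\alpha}=*\bar\alpha$), the harmonicity characterizations recalled in Section \ref{preliminaries} show that $\alpha\in\mathcal{H}^{1,1}_{\delbar}$, i.e. $\delbar\alpha=0$ and $\del*\alpha=0$, holds if and only if $\bar\alpha$ satisfies $\del\bar\alpha=0$ and $\delbar*\bar\alpha=0$, that is $\bar\alpha\in\mathcal{H}^{1,1}_{\del}$. Thus conjugation is a real-linear isomorphism $\mathcal{H}^{1,1}_{\delbar}\to\mathcal{H}^{1,1}_{\del}$. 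Because $\omega$ is real it fixes $\C\cdot\omega$, and because $\Lambda=-*L*$ is a real operator it preserves $P^{1,1}$; applying conjugation to the decomposition of Theorem \ref{thm: decomposition} therefore yields (i) at once. (Equivalently one may re-run the proof of Theorem \ref{thm: decomposition} with $\del$ and $\delbar$ swapped.)

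For (ii) and (iii) the essential point is that the Hodge star intertwines the two Laplacians. Using the characterizations $\del\alpha=0,\ \delbar*\alpha=0$ of $\del$-harmonicity, $\delbar\alpha=0,\ \del*\alpha=0$ of $\delbar$-harmonicity, and $**=\pm\id$, one checks directly that $*$ restricts to isomorphisms $\mathcal{H}^{p,q}_{\del}\to\mathcal{H}^{n-q,n-p}_{\delbar}$ and $\mathcal{H}^{p,q}_{\delbar}\to\mathcal{H}^{n-q,n-p}_{\del}$. Specializing to $(p,q)=(1,1)$, I would apply $*$ to decomposition (i) to get (ii), and to Theorem \ref{thm: decomposition} to get (iii), evaluating the star on each summand through \eqref{*-primitive}. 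On the non-primitive part $*\omega=\frac{1}{(n-1)!}\,\omega^{n-1}$, so $*(\C\cdot\omega)=\C\,\omega^{n-1}$; on a primitive $(1,1)$-form $\beta_{1,1}$, noting that in the notation of \eqref{*-primitive} the operator $J$ acts as $i^{p-q}$ on $A^{p,q}$ and hence $J\beta_{1,1}=\beta_{1,1}$, one gets $*\beta_{1,1}=-\frac{1}{(n-2)!}L^{n-2}\beta_{1,1}$, so the primitive part is carried bijectively onto $L^{n-2}\pa{\mathcal{H}^{1,1}_{\del}\cap P^{1,1}}$ (respectively onto $L^{n-2}\pa{\mathcal{H}^{1,1}_{\delbar}\cap P^{1,1}}$). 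This produces (ii) and (iii).

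Since every ingredient is already available, I expect no genuine obstacle; the care needed is purely in the bookkeeping. The two points to verify carefully are that $*$ lands in the correct bidegree and really interchanges $\Delta_{\del}$- and $\Delta_{\delbar}$-harmonicity (the almost-Kähler hypothesis enters only through (i) and Theorem \ref{thm: decomposition}, on which $*$ acts, and plays no further role), and that the constants and signs coming from \eqref{*-primitive} are tracked precisely, so that the images are exactly $\C\,\omega^{n-1}$ and $L^{n-2}(\cdots)$ and not merely proportional subspaces. The directness of the conjugation argument for (i) is what keeps the whole corollary short.
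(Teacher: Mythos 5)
Your proposal is correct and follows essentially the same route as the paper: conjugation transfers Theorem \ref{thm: decomposition} to the statement for $\mathcal{H}^{1,1}_{\del}$, and the Hodge-$*$-operator, evaluated via \eqref{*-primitive} (sending $\omega$ to a multiple of $\omega^{n-1}$ and acting as $-\frac{1}{(n-2)!}L^{n-2}$ on primitive $(1,1)$-forms), yields the $(n-1,n-1)$ decompositions. The only cosmetic difference is that the paper gets (iii) from (ii) by conjugation whereas you get it by applying $*$ directly to Theorem \ref{thm: decomposition}; the two are interchangeable, and your explicit check that $*$ interchanges $\del$- and $\delbar$-harmonicity and that conjugation preserves $\C\cdot\omega$ and $P^{1,1}$ just fills in details the paper leaves implicit.
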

\begin{proof}
The first decomposition follows from the one proved in Theorem \ref{thm: decomposition} by conjugation.

To prove the second, observe that the Hodge-$*$-operator induces an isomorphism $\mathcal{H}^{1, 1}_{\del} \simeq \mathcal{H}^{n - 1, n - 1}_{\delbar}$. Via this isomorphism, $\omega$ corresponds to $\omega^{n - 1}$, while by \eqref{*-primitive} on primitive $(1, 1)$-forms we have that $* = -\frac{1}{(n - 2)!}L^{n - 2}$. So we just have to apply $*$ to the decomposition of the previous point.

Finally, the last point follows from the second by conjugation.
\end{proof}

Recall that by \cite{cirici-wilson-2} (cf. also \cite{tardini-tomassini-diff-operators}) on compact almost-K\"ahler manifolds,
$$
\Delta_{\delbar}+\Delta_{\mu}=\Delta_{\del}+\Delta_{\bar\mu}
$$
and so, for every $p,q$
$$
\mathcal{H}^{p,q}_{\delbar}\cap \mathcal{H}^{p,q}_{\mu}=
\mathcal{H}^{p,q}_{\del}\cap \mathcal{H}^{p,q}_{\bar\mu}.
$$
In particular, if $J$ is integrable, namely $(X,J,g,\omega)$ is a compact K\"ahler manifold, one recovers the well known identities
$$
\Delta_{\delbar}=\Delta_{\del}
$$
and
$$
\mathcal{H}^{p,q}_{\delbar}=
\mathcal{H}^{p,q}_{\del}.
$$
Therefore, one could wonder if this last identity holds true, also in the non-integrable case, for some special bidegrees. More precisely, we want to show that the two primitive decompositions we obtained in Theorem \ref{thm: decomposition} and Corollary \ref{cor: decomposition} for $\mathcal{H}^{1, 1}_{\delbar}$ and $\mathcal{H}^{1, 1}_{\del}$ are not the same.

\section{Relations between $\Delta_{\delbar}$ and $\Delta_{\del}$}\label{section:relations-laplacians}

Let us start by considering the $4$-dimensional case.
Let $\alpha_{1, 1}$ be a primitive $(1, 1)$-form on an almost-K\"ahler $4$-dimensional manifold $X$. It follows from \eqref{*-primitive} that $*\alpha_{1, 1} = -\alpha_{1, 1}$, as a consequence we have the following


\begin{prop}\label{prop:dim4-equality}
Let $X$ be an almost-K\"ahler $4$-dimensional manifold. Then,
on $(1,1)$-forms we have
$$
\Delta_{\delbar_{\mid A^{1,1}}}=\Delta_{\del_{\mid A^{1,1}}}
$$
and in particular their kernels coincide
\[\mathcal{H}^{1, 1}_{\delbar} = \mathcal{H}^{1, 1}_{\del}.\]
\end{prop}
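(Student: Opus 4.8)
The plan is to show that on a $4$-dimensional almost-Kähler manifold the two Laplacians $\Delta_{\delbar}$ and $\Delta_{\del}$ agree when restricted to $(1,1)$-forms, and deduce the equality of kernels as an immediate corollary. The key structural input is the remark preceding the statement in the excerpt: for primitive $(1,1)$-forms on a $4$-manifold, formula \eqref{*-primitive} gives $*\alpha_{1,1}=-\alpha_{1,1}$. Since in dimension $4$ the bundle $\Lambda^{1,1}$ splits as $\C\cdot\omega \oplus P^{1,1}$, with $*\omega=\omega$ and $*$ acting as $-\id$ on $P^{1,1}$, the Hodge star is completely explicit on $A^{1,1}$. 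I would exploit this to rewrite the two Laplacians in terms of one another.

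First I would recall the characterizations from the Preliminaries: restricted to $(1,1)$-forms one has $\del^*=-*\delbar*$ and $\delbar^*=-*\del*$. The idea is to conjugate $\Delta_{\delbar}$ by $*$ and compare it with $\Delta_{\del}$. Writing $\Delta_{\delbar}=\delbar\delbar^*+\delbar^*\delbar$ and substituting $\delbar^*=-*\del*$, and similarly $\Delta_{\del}=\del\del^*+\del^*\del$ with $\del^*=-*\delbar*$, I would track how $*$ moves the bidegree of each term. The point is that an operator sends an element of $A^{1,1}$ into a sum of components in other bidegrees, and on a $4$-manifold the relevant image bidegrees are highly constrained (e.g. $\delbar$ sends $(1,1)$ to $(1,2)$, $\del$ to $(2,1)$, and $\mu,\bar\mu$ raise/lower in the forbidden ranges so that many terms vanish for dimension reasons). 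Because $*$ acts as a scalar ($\pm 1$) on each of the two summands $\C\cdot\omega$ and $P^{1,1}$, conjugation by $*$ on $(1,1)$-forms essentially interchanges the roles of $\del$ and $\delbar$ up to signs that cancel, yielding $\Delta_{\delbar}=\Delta_{\del}$ on $A^{1,1}$.

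The cleaner route — the one I would actually carry out — is to use the decomposition already established in Theorem \ref{thm: decomposition} and Corollary \ref{cor: decomposition}. In dimension $4$ these read $\mathcal{H}^{1,1}_{\delbar}=\C\cdot\omega\oplus(\mathcal{H}^{1,1}_{\delbar}\cap P^{1,1})$ and $\mathcal{H}^{1,1}_{\del}=\C\cdot\omega\oplus(\mathcal{H}^{1,1}_{\del}\cap P^{1,1})$, so the equality of kernels reduces to showing $\mathcal{H}^{1,1}_{\delbar}\cap P^{1,1}=\mathcal{H}^{1,1}_{\del}\cap P^{1,1}$. For a primitive $\beta_{1,1}$ with $*\beta_{1,1}=-\beta_{1,1}$, the harmonicity conditions are $\delbar\beta_{1,1}=0$ and $\del*\beta_{1,1}=0$ for $\Delta_{\delbar}$, versus $\del\beta_{1,1}=0$ and $\delbar*\beta_{1,1}=0$ for $\Delta_{\del}$. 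Using $*\beta_{1,1}=-\beta_{1,1}$, the pair of conditions for $\Delta_{\delbar}$ becomes $\delbar\beta_{1,1}=0$ and $\del\beta_{1,1}=0$, and symmetrically the conditions for $\Delta_{\del}$ become $\del\beta_{1,1}=0$ and $\delbar\beta_{1,1}=0$ — the same system. This gives $\mathcal{H}^{1,1}_{\delbar}\cap P^{1,1}=\mathcal{H}^{1,1}_{\del}\cap P^{1,1}$, hence equality of the full kernels, and the Laplacian identity would follow from the same $*=-\id$ computation applied to the full operators rather than just their kernels.

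The main obstacle I anticipate is the operator-level identity $\Delta_{\delbar}=\Delta_{\del}$ on all of $A^{1,1}$ (not merely on kernels), which requires verifying that the mixed terms involving $\mu$ and $\bar\mu$ — hidden inside $\del^*,\delbar^*$ via the Hodge star and the full splitting $d=\mu+\del+\delbar+\bar\mu$ — either cancel or vanish by bidegree in dimension $4$. One must be careful that $*$ is $\C$-linear here and that formula \eqref{*-primitive} is applied with the correct sign and the correct value of $n=2$; the bookkeeping of which of the four components of $d$ survives after composing with $*$ on a $4$-manifold is where a sign error would most naturally creep in. Once that dimension-$4$ vanishing is confirmed, the equality is forced term by term.
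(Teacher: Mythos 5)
Your proposal has a genuine gap: the main assertion of the Proposition is the \emph{operator} identity $\Delta_{\delbar}=\Delta_{\del}$ on all of $A^{1,1}$, and neither of your two routes establishes it. The route you say you would actually carry out proves only the equality of kernels, and under an extra hypothesis: Theorem \ref{thm: decomposition}, Corollary \ref{cor: decomposition} and the characterization $\Delta_{\delbar}\alpha=0\iff\delbar\alpha=0,\ \del*\alpha=0$ all require $X$ compact, whereas the statement is pointwise and needs no compactness. (Within the compact setting your kernel argument is correct, and it is essentially the paper's observation that $*=-\id$ on $P^{1,1}$ makes the two harmonicity systems coincide on primitive forms.) As for the operator identity, the conjugation sketch cannot be completed as described. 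Every ingredient you list --- $*$ acting as $+1$ on $\mathcal{C}^\infty(X;\C)\cdot\omega$ and as $-1$ on $\Gamma(P^{1,1})$, $\del^*=-*\delbar*$, $\delbar^*=-*\del*$, $**=\id$ on $2$-forms --- holds on an \emph{arbitrary} almost-Hermitian $4$-manifold, and what this formal bookkeeping yields is exactly the duality $*\Delta_{\delbar}*=\Delta_{\del}$ on $A^{1,1}$, nothing more. Consequently, $\Delta_{\delbar}=\Delta_{\del}$ on $A^{1,1}$ is \emph{equivalent} to $\Delta_{\delbar}$ commuting with $*$, i.e.\ to $\Delta_{\delbar}$ preserving the splitting $A^{1,1}=\mathcal{C}^\infty(X;\C)\,\omega\oplus\Gamma(P^{1,1})$. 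That preservation is the whole content of the Proposition: it is where the almost-K\"ahler hypothesis must enter (your sketch never uses $d\omega=0$), it is not a cancellation of signs, and the equality is not ``forced term by term'' --- the individual summands of the two Laplacians produce different $\omega$-components, and only the full sums agree.

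The missing mechanism is the one the paper records immediately after the statement: by \cite{cirici-wilson-2}, on any almost-K\"ahler manifold one has $\Delta_{\delbar}+\Delta_{\mu}=\Delta_{\del}+\Delta_{\bar\mu}$ (this is where $d\omega=0$ is used), and on $(1,1)$-forms of a $4$-manifold $\Delta_{\mu}=\Delta_{\bar\mu}=0$ for pure bidegree reasons, since $\mu$, $\bar\mu$, $\mu^*$, $\bar\mu^*$ map $A^{1,1}$ into $A^{3,0}=0$, $A^{0,3}=0$, $A^{-1,2}=0$, $A^{2,-1}=0$ respectively. This gives $\Delta_{\delbar}=\Delta_{\del}$ on $A^{1,1}$ pointwise, and the equality of kernels follows at once, with no appeal to Theorem \ref{thm: decomposition} and no compactness. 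If you prefer to stay inside your $*$-computation, what you must actually prove is that $\Delta_{\delbar}$ maps primitive $(1,1)$-forms to primitive forms; unwinding this, the obstruction is an $\omega$-component proportional to $\left(\delbar^*\del^*+\del^*\delbar^*\right)\beta=-\left(\mu\bar\mu+\bar\mu\mu\right)^*\beta$, which vanishes on $A^{1,1}$ in dimension $4$ by the same bidegree count. Either way, the $\mu,\bar\mu$-vanishing that you postponed as ``the main obstacle'' is not a verification left to the reader --- it is the proof, and your proposal does not supply it.
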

Notice that this follows also from \cite{cirici-wilson-2}, since on almost-K\"ahler manifolds $\Delta_{\delbar}+\Delta_{\mu}=\Delta_{\del}+\Delta_{\bar\mu}$ and on $(1, 1)$-forms on $4$-dimensional almost-K\"ahler manifolds $\Delta_{\mu}=\Delta_{\bar\mu}=0$.\\

We show now that in higher dimension the equality
$$
\Delta_{\delbar_{\mid A^{1,1}}}=\Delta_{\del_{\mid A^{1,1}}}
$$
does not hold in general.

\begin{ex}\label{ex:torus-different-laplacian}
Let $\mathbb{T}^6=\mathbb{Z}^6\backslash \mathbb{R}^6$ be the $6$-dimensional torus with $(x_1,x_2,x_3,y_1,y_2,y_3)$ coordinates on $\mathbb{R}^6$. Let $f=f(x_2)$ be a non-constant $\mathbb{Z}$-periodic function and we define the following non left-invariant almost-complex structure $J$ on $\mathbb{T}^6$,
$$
J\partial_{x_1}:=e^{-f}\partial_{y_1},\quad
J\partial_{x_2}:=\partial_{y_2},\quad
J\partial_{x_3}:=\partial_{y_3}.
$$
A global co-frame of $(1,0)$-forms is given by
$$
\Phi^1:=dx_1+i\,e^fdy_1,\quad
\Phi^2:=dx_2+i\,dy_2,\quad
\Phi^3:=dx_3+i\,dy_3\,.
$$
The structure equations are
$$
d\Phi^1=-\frac{1}{4}f'(x_2)\Phi^{12}-\frac{1}{4}f'(x_2)\Phi^{2\bar 1}-
\frac{1}{4}f'(x_2)\Phi^{1\bar 2}+\frac{1}{4}f'(x_2)\Phi^{\bar1\bar2}
$$
and $d\Phi^2=d\Phi^3=0$.
Then, the $(1,1)$-form
$$
\omega:=\frac{i}{2}e^{-f}\Phi^{1\bar 1}+\frac{i}{2}\Phi^{2\bar 2}+\frac{i}{2}\Phi^{3\bar 3}
$$
is a compatible symplectic structure, namely $(J,\omega)$ is an almost-K\"ahler structure on $\mathbb{T}^6$.\\
Notice now that by a direct computation
$$
\bar\mu\Phi^{1\bar 3}=\frac{1}{4}f'(x_2)\Phi^{\bar1\bar2\bar3}\neq 0
$$
and
$$
\mu\Phi^{1\bar3}=0.
$$
Therefore, from \cite{cirici-wilson-2}
$$
(\Delta_{\delbar}-\Delta_{\del})\Phi^{1\bar3}=-\bar\mu^*\bar\mu\Phi^{1\bar3}\neq 0.
$$
The last point follows either by direct computation or by noticing that
$$
\bar\mu^*\bar\mu\Phi^{1\bar3}\neq 0 \quad\iff\quad
\| \bar\mu\Phi^{1\bar3}\|^2\neq 0\quad\iff\quad
\bar\mu\Phi^{1\bar 3}\neq 0.
$$
\end{ex}
Another example is provided by the following $8$-dimensional nilmanifold with a left-invariant almost-K\"ahler structure.
\begin{ex}
We recall the following construction contained in \cite{debartolomeis-tomassini}.
Set
\[
\mathbb{H}(1,2):=\left\lbrace
\left[\begin{matrix}
1 & 0 & x_1 & z_1\\
0 & 1 & x_2 & z_2\\
0 & 0 & 1 & y\\
0 & 0 & 0 & 1
\end{matrix}\right]
\mid x_1,x_2,y,z_1,z_2\in \mathbb{R}\right\rbrace.
\]
Let $\Gamma$ be the subgroup of matrices with integral entries, let $X:=\Gamma\backslash \mathbb{H}(1,2)$ and define
$$
M:=X\times \mathbb{T}^3.
$$
Denoting with $u,v,w$ coordinates on $\mathbb{T}^3$ we consider the following left-invariant $1$-forms
$$
e^1:=dx_2,\quad
e^2:=dx_1,\quad
e^3:=dy,\quad
e^4:=du,\quad
$$
$$
e^5:=dz_1-x_1dy,\quad
e^6:=dz_2-x_2dy,\quad
e^7:=dv,\quad
e^8:=dw,\quad
$$
and the structure equations become
$$
de^1=de^2=de^3=de^4=de^7=de^8=0,\quad
de^5=-e^{23},\quad
de^6=-e^{13}\,.
$$
We define the following symplectic structure
$$
\omega:=e^{15}+e^{26}+e^{37}+e^{48}
$$
and we take the compatible almost-complex structure defined by the following co-frame of $(1,0)$-forms
$$
\psi^1:=e^1+i\,e^5,\quad
\psi^2:=e^2+i\,e^6,\quad
\psi^3:=e^3+i\,e^7,\quad
\psi^4:=e^4+i\,e^8.
$$
By direct computation
$$
d\psi^{1\bar4}=-\frac{i}{4}\psi^{23\bar4}-\frac{i}{4}\psi^{2\bar3\bar4}+
\frac{i}{4}\psi^{3\bar2\bar4}-\frac{i}{4}\psi^{\bar2\bar3\bar4},
$$
hence
$$
\mu\psi^{1\bar4}=0,\quad
\bar\mu\psi^{1\bar4}=-\frac{i}{4}\psi^{\bar2\bar3\bar4}.
$$
Therefore,
$$
(\Delta_{\delbar}-\Delta_{\del})\psi^{1\bar4}=(\Delta_{\bar\mu}-\Delta_{\mu})\psi^{1\bar4}=\bar\mu^*\bar\mu\psi^{1\bar4}\neq 0,
$$
proving that
$$
\Delta_{\delbar}\neq\Delta_{\del}
$$
on $(1,1)$-forms. However, one can show that their kernels coincide, namely $\mathcal{H}^{1, 1}_{\delbar} = \mathcal{H}^{1, 1}_{\del}$.
\end{ex}

\begin{rem}
We want to point out that finding explicit examples of almost-K\"ahler manifolds with $\Delta_{\delbar}\neq\Delta_{\del}$ seems to be not so obvious. In fact, we couldn't find any left-invariant example in dimension $6$.
\end{rem}

Even though, $\Delta_{\delbar_{\mid A^{1,1}}}\neq\Delta_{\del_{\mid A^{1,1}}}$ in general,
we wonder whether their kernels coincide. Before showing that this is not the case we notice that the equality $\mathcal{H}^{1, 1}_{\delbar} = \mathcal{H}^{1, 1}_{\del}$ is equivalent to $\mathcal{H}^{1, 1}_{\delbar} \cap P^{1, 1} = \mathcal{H}^{1, 1}_{\del} \cap P^{1, 1}$.

\begin{lemma}\label{lemma:equiv-harmonic-primitive}
Let $(X, J, g, \omega)$ be an almost-K\"ahler manifold. Then $\mathcal{H}^{1, 1}_{\delbar} = \mathcal{H}^{1, 1}_{\del}$ if and only if $\mathcal{H}^{1, 1}_{\delbar} \cap P^{1, 1} = \mathcal{H}^{1, 1}_{\del} \cap P^{1, 1}$.
\end{lemma}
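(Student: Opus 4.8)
The plan is to deduce the lemma almost entirely from the primitive decompositions already established in Theorem \ref{thm: decomposition} and Corollary \ref{cor: decomposition}(i), so that the only genuine input is the observation that $\omega$ itself is simultaneously $\delbar$- and $\del$-harmonic. First I would record this latter fact: since the metric is almost-K\"ahler we have $d\omega=0$, and decomposing $d\omega$ by bidegree this forces $\del\omega=\delbar\omega=0$; moreover, by \eqref{*-primitive}, $*\omega=\frac{1}{(n-1)!}\omega^{n-1}$ is a power of $\omega$, hence closed, so $\del*\omega=\delbar*\omega=0$ as well. By the harmonicity characterizations recorded in Section \ref{preliminaries} this shows $\C\cdot\omega\subset\mathcal{H}^{1,1}_{\delbar}\cap\mathcal{H}^{1,1}_{\del}$, which is exactly the common summand appearing in both decompositions.

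With this in hand, recall from Theorem \ref{thm: decomposition} and Corollary \ref{cor: decomposition}(i) the direct sum decompositions
$$
\mathcal{H}^{1,1}_{\delbar}=\C\cdot\omega\oplus\pa{\mathcal{H}^{1,1}_{\delbar}\cap P^{1,1}},\qquad
\mathcal{H}^{1,1}_{\del}=\C\cdot\omega\oplus\pa{\mathcal{H}^{1,1}_{\del}\cap P^{1,1}}.
$$
For the implication from $\mathcal{H}^{1,1}_{\delbar}=\mathcal{H}^{1,1}_{\del}$ to the equality of primitive parts, I would simply intersect the equal subspaces with $P^{1,1}$: since $\mathcal{H}^{1,1}_{\delbar}=\mathcal{H}^{1,1}_{\del}$ as subspaces of $A^{1,1}(X)$, taking $\cap\,P^{1,1}$ on both sides yields $\mathcal{H}^{1,1}_{\delbar}\cap P^{1,1}=\mathcal{H}^{1,1}_{\del}\cap P^{1,1}$ at once.

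For the converse, assume $\mathcal{H}^{1,1}_{\delbar}\cap P^{1,1}=\mathcal{H}^{1,1}_{\del}\cap P^{1,1}$ and substitute this common primitive part into the two displayed decompositions; since the remaining summand $\C\cdot\omega$ is identical in both, the two spaces coincide, giving $\mathcal{H}^{1,1}_{\delbar}=\mathcal{H}^{1,1}_{\del}$. I do not expect any serious obstacle here: the entire nontrivial content has already been absorbed into the decomposition results, and the lemma is a formal consequence of those together with the fact that the non-primitive line $\C\cdot\omega$ is shared. The only point requiring care is to make sure the decompositions are genuine internal direct sums with the \emph{same} one-dimensional complement $\C\cdot\omega$, which is guaranteed precisely because $\omega$ is harmonic for both Laplacians.
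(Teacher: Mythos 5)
Your proof is correct and follows essentially the same route as the paper: the paper likewise deduces the nontrivial implication from the decompositions of Theorem \ref{thm: decomposition} and Corollary \ref{cor: decomposition}, writing an arbitrary $\alpha_{1,1}\in\mathcal{H}^{1,1}_{\delbar}$ as $c\,\omega+\beta_{1,1}$ and applying $\Delta_{\del}$ term by term, which uses exactly the two facts you isolate (the shared summand $\C\cdot\omega$ is harmonic for both Laplacians, and the primitive parts agree). Your subspace-level substitution and the paper's element-wise computation are the same argument in different packaging.
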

\begin{proof}
We prove only the non-trivial implication. Let $\alpha_{1, 1} \in \mathcal{H}^{1, 1}_{\delbar}$, then we can decompose it as $\alpha_{1, 1} = c \omega + \beta_{1, 1}$ with $c \in \C$ and $\beta_{1, 1} \in \mathcal{H}^{1, 1}_{\delbar} \cap P^{1, 1}$. Now,
\[\Delta_{\del} \alpha_{1, 1} = c \cdot \Delta_{\del} \omega + \Delta_{\del} \beta_{1, 1} = 0 + 0 = 0,\]
so $\alpha_{1, 1} \in \mathcal{H}^{1, 1}_{\del}$. The other inclusion is similar.
\end{proof}

We observe the following 

\begin{lemma}
Let $(X^{2n}, J, g, \omega)$ be a $2n$-dimensional almost-K\"ahler manifold. Let $k:=p+q\leq n$ and let $\alpha \in P^{p, q}$. Then, 
$$
\delbar\alpha=0 \qquad\Longrightarrow\qquad \del^*\alpha = 0\,.
$$
Similarly,
$$
\del\alpha=0 \qquad\Longrightarrow\qquad \delbar^*\alpha = 0\,.
$$
\end{lemma}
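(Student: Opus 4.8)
The plan is to reduce the statement to the commutation of $\delbar$ with the Lefschetz operator $L$, which holds precisely because the metric is almost-K\"ahler. First I would recall that, by the preliminaries, $\del^* = -*\delbar*$, so proving $\del^*\alpha = 0$ is the same as proving $\delbar*\alpha = 0$. The role of the primitivity hypothesis is that the Hodge-$*$-operator acts very explicitly on $\alpha$: applying formula \eqref{*-primitive} with $r = 0$ to the primitive $(p, q)$-form $\alpha$ (so that $k = p + q$) gives
\[
*\alpha = (-1)^{\frac{k(k+1)}{2}} \frac{1}{(n - k)!} L^{n - k} J\alpha = c\, L^{n - k}\alpha,
\]
where $c \neq 0$ is a constant, since $J$ acts on $(p, q)$-forms as multiplication by the nonzero scalar $i^{p - q}$. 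Thus $*\alpha$ is, up to a nonzero constant, simply $L^{n - k}\alpha$.

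Next I would exploit the almost-K\"ahler condition. Since $\omega$ is a real $(1, 1)$-form with $d\omega = 0$, comparing bidegrees in $d\omega = \mu\omega + \del\omega + \delbar\omega + \bar\mu\omega$ forces $\delbar\omega = 0$ (and likewise $\del\omega = 0$). As $\omega$ has even degree, the Leibniz rule then yields $\delbar(L\beta) = \delbar(\omega \wedge \beta) = \omega \wedge \delbar\beta = L\,\delbar\beta$ for every form $\beta$, that is $[\delbar, L] = 0$ (and symmetrically $[\del, L] = 0$). Combining this with the previous step and the hypothesis $\delbar\alpha = 0$ gives
\[
\delbar*\alpha = c\,\delbar L^{n - k}\alpha = c\, L^{n - k}\delbar\alpha = 0,
\]
so that $\del^*\alpha = -*\delbar*\alpha = 0$, as desired. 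The second implication is proved identically, exchanging the roles of $\del$ and $\delbar$: one uses $\delbar^* = -*\del*$, the same expression for $*\alpha$, and the commutation $[\del, L] = 0$, so that $\del\alpha = 0$ forces $\del*\alpha = c\,L^{n-k}\del\alpha = 0$ and hence $\delbar^*\alpha = 0$.

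I do not expect any serious obstacle here; the whole argument rests on two elementary facts, namely that $*$ sends a primitive $(p, q)$-form to a nonzero multiple of $L^{n - k}\alpha$ and that $L$ commutes with $\delbar$ (and with $\del$) on almost-K\"ahler manifolds. The only points needing a little care are checking that the constant $c$ is genuinely nonzero for all admissible $p, q$ with $p + q \le n$ — which is clear since $n - k \ge 0$ and both the factorial and the scalar $i^{p-q}$ are nonzero — and confirming the sign in the Leibniz rule, which is immediate because $\deg\omega = 2$ is even.
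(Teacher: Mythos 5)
Your proof is correct and takes essentially the same route as the paper's own: both use formula \eqref{*-primitive} to write $*\alpha$ as a nonzero constant times $\alpha\wedge\omega^{n-k}$, and then use the closedness of $\omega$ together with $\delbar\alpha=0$ to get $\delbar*\alpha=0$, hence $\del^*\alpha=-*\delbar*\alpha=0$. You simply make explicit the steps the paper leaves implicit (the bidegree argument giving $\delbar\omega=\del\omega=0$, the resulting commutation of $\delbar$ and $\del$ with $L$, and the nonvanishing of the constant coming from $J\alpha=i^{p-q}\alpha$).
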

\begin{proof}
By \eqref{*-primitive} we have that
\[
*\alpha = (-1)^{\frac{k(k+1)}{2}}\frac{i^{p-q}}{(n - k)!} \alpha \wedge \omega^{n - k}.
\]
Since $\omega$ is closed this readily implies that $\delbar *\alpha = 0$.\\
The same holds switching $\delbar$ and $\del$.
\end{proof}

\begin{lemma}
Let $(X, J, g, \omega)$ be an almost-K\"ahler manifold. Let $\alpha_{1, 1} \in \mathcal{H}^{1, 1}_{\delbar} \cap P^{1, 1}$. Then $d^*\alpha_{1, 1} = 0$.
\end{lemma}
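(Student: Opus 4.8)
The plan is to expand the codifferential as $d^*=\mu^*+\del^*+\delbar^*+\bar\mu^*$ and to check that each of the four summands annihilates $\alpha_{1,1}$, using the characterization recorded in the Preliminaries that $\alpha\in\mathcal{H}^{1,1}_{\delbar}$ precisely when $\delbar\alpha=0$ and $\del*\alpha=0$, together with the preceding Lemma.

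First I would dispose of the two off-diagonal pieces by bidegree bookkeeping. Since $\mu^*$ has bidegree $(-2,+1)$ and $\bar\mu^*$ has bidegree $(+1,-2)$, the forms $\mu^*\alpha_{1,1}$ and $\bar\mu^*\alpha_{1,1}$ would lie in $A^{-1,2}(X)$ and $A^{2,-1}(X)$ respectively, and both of these bundles are trivial. Hence $d^*\alpha_{1,1}=\del^*\alpha_{1,1}+\delbar^*\alpha_{1,1}$, and it remains to kill these two terms.

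For the first term I would apply the preceding Lemma directly: as $\alpha_{1,1}\in P^{1,1}$ with $k=1+1=2\le n$ and $\alpha_{1,1}$ is $\delbar$-closed (being $\delbar$-harmonic), the implication $\delbar\alpha=0\Rightarrow\del^*\alpha=0$ gives $\del^*\alpha_{1,1}=0$. For the second term I would instead use the adjointness relation $\delbar^*=-*\del*$, whence $\delbar^*\alpha_{1,1}=-*(\del*\alpha_{1,1})$; the inner factor $\del*\alpha_{1,1}$ vanishes because it is exactly the second of the two conditions defining $\mathcal{H}^{1,1}_{\delbar}$. Assembling the three vanishings yields $d^*\alpha_{1,1}=0$.

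The argument is essentially bookkeeping, so I expect no serious obstacle. The only points requiring care are the correct reading of the bidegrees of $\mu^*$ and $\bar\mu^*$ (a mislabelling would break the reduction) and the realization that the $\delbar^*$-term must be handled \emph{directly} through the harmonicity condition $\del*\alpha_{1,1}=0$, rather than through the second implication of the preceding Lemma, which would require the hypothesis $\del\alpha_{1,1}=0$ that is not among our assumptions.
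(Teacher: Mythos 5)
Your proof is correct and is essentially the paper's own argument in dual form: the paper shows $d*\alpha_{1,1}=0$ by noting that $\mu$ and $\bar\mu$ kill the $(n-1,n-1)$-form $*\alpha_{1,1}$ for bidegree reasons, that $\delbar*\alpha_{1,1}=0$ by the preceding lemma, and that $\del*\alpha_{1,1}=0$ by harmonicity, which under $d^*=-*d*$ matches term by term your vanishings of $\bar\mu^*\alpha_{1,1}$, $\mu^*\alpha_{1,1}$, $\del^*\alpha_{1,1}$ and $\delbar^*\alpha_{1,1}$. Your closing remark is also well taken: the $\delbar^*$-term must indeed come from the harmonicity condition $\del*\alpha_{1,1}=0$ and not from the lemma's second implication, since $\del\alpha_{1,1}=0$ is not assumed.
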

\begin{proof}
Since $*\alpha_{1, 1}$ is a $(n - 1, n - 1)$-form, by the previous lemma we have that
\[d *\alpha_{1, 1} = (\del + \delbar) *\alpha_{1, 1} = \del *\alpha_{1, 1} + \delbar *\alpha_{1, 1} = 0.\]
\end{proof}

\begin{lemma}
Let $(X, J, g, \omega)$ be an almost-K\"ahler manifold. Let $\alpha_{1, 1} \in \mathcal{H}^{1, 1}_{\delbar} \cap P^{1, 1}$. Then $d \alpha_{1, 1}$, $\mu \alpha_{1, 1}$, $\del \alpha_{1, 1}$, $\delbar \alpha_{1, 1}$ and $\bar{\mu} \alpha_{1, 1}$ are primitive.
\end{lemma}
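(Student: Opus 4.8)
The plan is to note that, among the five forms listed, only the primitivity of $\del\alpha_{1,1}$ requires a genuine argument, the other four being essentially free. Since $\alpha_{1,1}\in\mathcal{H}^{1,1}_{\delbar}$ one has $\delbar\alpha_{1,1}=0$, which is trivially primitive. The form $\mu\alpha_{1,1}$ has bidegree $(3,0)$ and $\bar\mu\alpha_{1,1}$ has bidegree $(0,3)$, and any $(p,0)$- or $(0,q)$-form is automatically primitive because $\Lambda$ has bidegree $(-1,-1)$ and would send it into a space of negative degree. Finally, writing $d\alpha_{1,1}=\mu\alpha_{1,1}+\del\alpha_{1,1}+\delbar\alpha_{1,1}+\bar\mu\alpha_{1,1}$ and recalling that $\Lambda$ maps each bidegree summand into a distinct bidegree, the form $d\alpha_{1,1}$ is primitive as soon as each summand is. Hence the whole statement reduces to showing $\Lambda\del\alpha_{1,1}=0$.

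To prove that the $(2,1)$-form $\del\alpha_{1,1}$ is primitive, I would use the almost-K\"ahler analogue of the classical K\"ahler identity, namely $[\Lambda,\del]=i\,\delbar^*$ on an almost-K\"ahler manifold. This is part of the Cirici--Wilson calculus already invoked in this paper; concretely it is the $(0,-1)$-bidegree component of the commutation relation for $\delta=\del+\bar\mu$, the companion components being $[\Lambda,\bar\mu]=i\,\mu^*$, $[\Lambda,\delbar]=-i\,\del^*$ and $[\Lambda,\mu]=-i\,\bar\mu^*$. Applying $[\Lambda,\del]=i\,\delbar^*$ to $\alpha_{1,1}$ yields $\Lambda\del\alpha_{1,1}=\del\Lambda\alpha_{1,1}+i\,\delbar^*\alpha_{1,1}$.

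It then remains to kill both terms on the right. The first vanishes because $\alpha_{1,1}$ is primitive, so $\Lambda\alpha_{1,1}=0$. For the second I would invoke the immediately preceding lemma, which gives $d^*\alpha_{1,1}=0$. Decomposing $d^*=\mu^*+\del^*+\delbar^*+\bar\mu^*$ and evaluating on the $(1,1)$-form $\alpha_{1,1}$, the terms $\mu^*\alpha_{1,1}$ and $\bar\mu^*\alpha_{1,1}$ vanish for bidegree reasons (they would lie in $A^{-1,2}$ and $A^{2,-1}$), so $d^*\alpha_{1,1}=\del^*\alpha_{1,1}+\delbar^*\alpha_{1,1}$ is the sum of a $(0,1)$-form and a $(1,0)$-form; since it is zero, each summand is zero, and in particular $\delbar^*\alpha_{1,1}=0$. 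Therefore $\Lambda\del\alpha_{1,1}=0$, so $\del\alpha_{1,1}$ is primitive, and combined with the first paragraph this settles all five claims.

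The only real subtlety is having the correct almost-K\"ahler commutation relation $[\Lambda,\del]=i\,\delbar^*$ at hand; its precise sign and normalization are irrelevant here, since all that is needed is that $\Lambda\del\alpha_{1,1}$ is a scalar multiple of $\delbar^*\alpha_{1,1}$, which already vanishes. Everything else is bidegree bookkeeping together with the two preceding lemmas.
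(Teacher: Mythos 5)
Your proof is correct, but it takes a genuinely different route from the paper's. The paper never isolates $\del\alpha_{1,1}$: it takes the preceding lemma's conclusion $d^*\alpha_{1,1}=0$ in the equivalent form $d*\alpha_{1,1}=0$, substitutes the expression $*\alpha_{1,1}=-\tfrac{1}{(n-2)!}\,\alpha_{1,1}\wedge\omega^{n-2}$ from \eqref{*-primitive}, and uses $d\omega=0$ to get $d\alpha_{1,1}\wedge\omega^{n-2}=0$, i.e.\ $L^{n-2}d\alpha_{1,1}=0$; this is primitivity of the whole $3$-form $d\alpha_{1,1}$ in one stroke, and the claims for $\mu\alpha_{1,1}$, $\del\alpha_{1,1}$, $\delbar\alpha_{1,1}$, $\bar\mu\alpha_{1,1}$ follow by splitting into types --- the reverse of your reduction. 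You instead dispose of four of the five forms by bidegree bookkeeping and attack only $\del\alpha_{1,1}$, via the commutation relation $[\Lambda,\del]=i\,\delbar^*$, which is indeed valid on almost-K\"ahler manifolds as the $(0,-1)$-bidegree component of the identity $[\Lambda,\del+\bar\mu]=i(\delbar+\mu)^*$ of \cite{cirici-wilson-2}, together with $\Lambda\alpha_{1,1}=0$ and $\delbar^*\alpha_{1,1}=0$. Both arguments are sound; what they buy is different. The paper's is more self-contained, using only Weil's formula \eqref{*-primitive} and closedness of $\omega$, with no need to import the almost-K\"ahler identities. Yours is more conceptual: it identifies the obstruction $\Lambda\del\alpha_{1,1}$ literally with $i\,\delbar^*\alpha_{1,1}$, and thereby shows that primitivity of $\del\alpha_{1,1}$ requires only $\Lambda\alpha_{1,1}=0$ and $\delbar^*\alpha_{1,1}=0$, not the full strength of $d^*\alpha_{1,1}=0$. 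In fact your detour through the preceding lemma is unnecessary: on a compact manifold, $\Delta_{\delbar}$-harmonicity already gives $\del*\alpha_{1,1}=0$, which is exactly $\delbar^*\alpha_{1,1}=0$ since $\delbar^*=-*\del\,*$, so your argument for $\del\alpha_{1,1}$ closes using only the hypotheses of the lemma (note that both your argument and the paper's rely on the compactness standing in the paper's conventions, since the characterization of $\mathcal{H}^{1,1}_{\delbar}$ by the two equations $\delbar\alpha=0$, $\del*\alpha=0$ is established there for $X$ compact).
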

\begin{proof}
From the previous lemma and \eqref{*-primitive} we deduce that
\[0 = d *\alpha_{1, 1} = -\frac{1}{(n - 2)!} d(\alpha \wedge \omega^{n - 2}) = -\frac{1}{(n - 2)!} d\alpha \wedge \omega^{n - 2}.\]
So $d\alpha_{1, 1}$ is primitive and by decomposition in types we deduce that also $\mu \alpha_{1, 1}$, $\del \alpha_{1, 1}$, $\delbar \alpha_{1, 1}$ and $\bar{\mu} \alpha_{1, 1}$ are primitive.
\end{proof}

We finally show that, in general, on compact almost-K\"ahler manifolds
$$
\mathcal{H}^{1,1}_{\delbar}\neq\mathcal{H}^{1,1}_{\del}.
$$
By Lemma (\ref{lemma:equiv-harmonic-primitive}) this will be done using primitive forms.

\begin{ex}
Using the same notations as in Example \ref{ex:torus-different-laplacian} we consider $\mathbb{T}^6=\mathbb{Z}^6\backslash \mathbb{R}^6$. Let $g=g(x_3,y_3)$ be a function on $\mathbb{T}^6$.
We define an almost-complex structure $J$ setting as global co-frame of $(1,0)$-forms 
$$
\varphi^1:=e^gdx_1+i\,e^{-g}dy_1,\quad
\varphi^2:=dx_2+i\,dy_2,\quad
\varphi^3:=dx_3+i\,dy_3\,.
$$
The structure equations are
$$
d\varphi^1=V_3(g)\varphi^{3\bar1}-\bar V_3(g)\varphi^{\bar1\bar3}
$$
where $\left\lbrace V_1,V_2,V_3\right\rbrace$ is the global frame of vector fields dual to $\left\lbrace\varphi^1,\varphi^2,\varphi^3\right\rbrace$,
and $d\varphi^2=d\varphi^3=0$. Assume finally that $g$ satisfies $V_3(g) \neq 0$.\\
Then, the $(1,1)$-form
$$
\omega:=\frac{i}{2}\varphi^{1\bar 1}+\frac{i}{2}\varphi^{2\bar 2}+\frac{i}{2}\varphi^{3\bar 3}
$$
is a compatible symplectic structure, namely $(J,\omega)$ is an almost-K\"ahler structure on $\mathbb{T}^6$.\\
Notice now that,
$$
\delbar\varphi^{1\bar 2}=V_3(g)\varphi^{3\bar1\bar2}\neq 0
$$
namely, $\varphi^{1\bar2}\notin\mathcal{H}^{1,1}_{\delbar}$ but $\varphi^{1\bar2}\in\mathcal{H}^{1,1}_{\del}$. Indeed,
$\del\varphi^{1\bar2}=0$ and, since $\varphi^{1\bar2}$ is primitive and $\omega$ is closed,
$$
\delbar*\varphi^{1\bar2}=\delbar(-\omega\wedge\varphi^{1\bar2})=-\omega\wedge\delbar\varphi^{1\bar2}=-\omega\wedge\left(V_3(g)\varphi^{3\bar1\bar2}\right)=0.
$$
Hence, $\del^*\varphi^{1\bar2}=-*\delbar*\varphi^{1\bar2}=0$.
\end{ex}

\section{Primitive decompositions in dimension $6$}

Notice that in view of Propositions \ref{prop1: decomposition}, \ref{prop2: decomposition}, Theorem \ref{thm: decomposition} and Corollary \ref{cor: decomposition} we have a full primitive description of all $\delbar$-harmonic forms on compact $4$-dimensional almost-K\"ahler manifolds. It is therefore natural to ask what happens for bidegrees different from $(p,0)$, $(0,q)$, $(n,q)$, $(p,n)$, $(1,1)$ and $(n-1,n-1)$ in higher dimension. The first interesting dimension to consider is $6$ and in this case the only bidegrees left are $(2,1)$ and $(1,2)$.
Let us focus, for instance, on bidegree $(2,1)$. The primitive decomposition of forms is
\begin{equation*}
A^{2,1}(X)=P^{2,1}\oplus L\left(A^{1,0}(X)\right).
\end{equation*}
Passing to $\delbar$-harmonic forms, it follows that 
\[
\mathcal{H}^{2,1}_{\delbar}\supseteq\left(\mathcal{H}^{2,1}_{\delbar}\cap P^{2,1}\right)\oplus L\left(\mathcal{H}^{1,0}_{\delbar}\right),
\]
indeed, on compact almost-K\"ahler manifolds, for bidegree reasons and \cite{cirici-wilson-2} one has
$$
\mathcal{H}^{1,0}_{\delbar}=
\mathcal{H}^{1,0}_{\delbar}\cap
\mathcal{H}^{1,0}_{\mu}=
\mathcal{H}^{1,0}_{\del}\cap
\mathcal{H}^{1,0}_{\bar\mu}.
$$
Therefore it is natural to wonder whether such inclusion is indeed an identity.
In fact, this is not the case in general, as it is shown by the following

\begin{prop}
There exists a compact almost-K\"ahler $6$-dimensional manifold $(X,J,\omega)$ such that
\[
\mathcal{H}^{2,1}_{\delbar}\neq\left(\mathcal{H}^{2,1}_{\delbar}\cap P^{2,1}\right)\oplus L\left(\mathcal{H}^{1,0}_{\delbar}\right).
\]
\end{prop}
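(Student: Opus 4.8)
The inclusion $\supseteq$ is already recorded above, so the content of the statement is its strictness, and the natural way to establish it is to exhibit an explicit compact almost-K\"ahler $6$-manifold together with a single $\delbar$-harmonic $(2,1)$-form whose primitive components fail to be harmonic. The plan is to take $X=\mathbb{T}^6$ with a non-left-invariant almost-complex structure built from a nonconstant $\mathbb{Z}$-periodic function, exactly in the spirit of Example \ref{ex:torus-different-laplacian} and the last example above, to choose a global $(1,0)$-form $\gamma$ with $\delbar\gamma\neq0$, and then to correct $\omega\wedge\gamma$ by a primitive $(2,1)$-form $\beta_{2,1}$ so that the sum becomes $\delbar$-harmonic.

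First I would set up the harmonicity conditions in bidegree $(2,1)$ on a $6$-manifold. Writing $\alpha=\beta_{2,1}+\omega\wedge\gamma$ for the primitive decomposition \eqref{primitive-bundle-decomposition}, formula \eqref{*-primitive} with $n=3$ gives $*\beta_{2,1}=i\,\beta_{2,1}$ on primitive $(2,1)$-forms and $*(\omega\wedge\gamma)=-i\,\omega\wedge\gamma$, so that $*\alpha=i(\beta_{2,1}-\omega\wedge\gamma)$. Since $\omega$ is closed, the characterization $\Delta_{\delbar}\alpha=0\iff\delbar\alpha=0,\ \del*\alpha=0$ becomes the system
\[
\delbar\beta_{2,1}+\omega\wedge\delbar\gamma=0,\qquad \del\beta_{2,1}-\omega\wedge\del\gamma=0 .
\]
By uniqueness of the primitive decomposition, $\alpha$ belongs to $(\mathcal{H}^{2,1}_{\delbar}\cap P^{2,1})\oplus L(\mathcal{H}^{1,0}_{\delbar})$ if and only if $\beta_{2,1}\in\mathcal{H}^{2,1}_{\delbar}\cap P^{2,1}$ and $\gamma\in\mathcal{H}^{1,0}_{\delbar}$; hence it suffices to produce a solution of the system with $\gamma\notin\mathcal{H}^{1,0}_{\delbar}$. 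In fact the choice $\delbar\gamma\neq0$ already guarantees $\delbar\beta_{2,1}=-\omega\wedge\delbar\gamma\neq0$, since $L\colon A^{1,1}\to A^{2,2}$ is injective for $n=3$, so that neither primitive piece is $\delbar$-closed and the harmonic form $\alpha$ arises purely from the cancellation of two non-harmonic summands.

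The construction itself then amounts to choosing $\gamma$ and $\beta_{2,1}$ inside the algebra generated by the global co-frame and verifying the two equations from the explicit structure equations, using $d\omega=0$ to drop all terms involving $\del\omega$, $\delbar\omega$, $\mu\omega$ and $\bar\mu\omega$. I would pick $\gamma$ so that $\delbar\gamma$, a nonzero $(1,1)$-form, and $\del\gamma$, a $(2,0)$-form, are both of a shape for which $\omega\wedge\delbar\gamma$ and $\omega\wedge\del\gamma$ can be written as the $\delbar$- respectively $\del$-image of an explicit primitive $(2,1)$-form $\beta_{2,1}$; the torus examples, whose structure constants are driven by a single derivative such as $f'$ or $V_3(g)$, are well suited to matching both sides coefficient by coefficient.

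The main obstacle is precisely this last matching: because the almost-complex structure is not left-invariant, one cannot reduce to finite-dimensional linear algebra on invariant forms, and $\beta_{2,1}$ must be written with function coefficients and checked to be globally well defined, primitive, and to solve both equations simultaneously. Once such a pair $(\gamma,\beta_{2,1})$ is exhibited, the form $\alpha=\beta_{2,1}+\omega\wedge\gamma$ lies in $\mathcal{H}^{2,1}_{\delbar}$ while its $(1,0)$-component $\gamma$ is not $\delbar$-harmonic, so $\alpha\notin(\mathcal{H}^{2,1}_{\delbar}\cap P^{2,1})\oplus L(\mathcal{H}^{1,0}_{\delbar})$, which proves the strict inequality.
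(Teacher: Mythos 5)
Your reduction of the problem is correct as far as it goes: the Hodge star computations ($*\beta_{2,1}=i\,\beta_{2,1}$ on primitive $(2,1)$-forms and $*(\omega\wedge\gamma)=-i\,\omega\wedge\gamma$), the resulting system $\delbar\beta_{2,1}+\omega\wedge\delbar\gamma=0$, $\del\beta_{2,1}-\omega\wedge\del\gamma=0$, and the observation that, by uniqueness of the primitive decomposition, $\alpha$ lies in $\left(\mathcal{H}^{2,1}_{\delbar}\cap P^{2,1}\right)\oplus L\left(\mathcal{H}^{1,0}_{\delbar}\right)$ if and only if both primitive components are themselves harmonic, are all sound. But what you have written is a strategy, not a proof. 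The proposition is an existence statement, and you never exhibit the manifold, the $(1,0)$-form $\gamma$ with $\delbar\gamma\neq 0$, or the primitive corrector $\beta_{2,1}$ solving the two coupled equations simultaneously; you yourself name this ``matching'' as the main obstacle and then leave it unresolved. Nothing guarantees a priori that the coupled system admits a global solution with $\delbar\gamma\neq 0$ on your chosen torus structures, so the entire content of the proposition remains unestablished.

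The gap is aggravated by your choice of setting. By insisting on a non-left-invariant structure on $\mathbb{T}^6$, you deliberately forgo the reduction to finite-dimensional linear algebra, which is precisely the tool the paper uses to close this kind of argument. The paper's proof (Lemma \ref{lemma:left-inv} combined with Example \ref{ex:no-2-1-decomposition}) works on the Iwasawa manifold endowed with a left-invariant non-integrable almost-K\"ahler structure: the lemma shows that the primitive decomposition of a left-invariant $(2,1)$-form has left-invariant pieces, so membership of a left-invariant harmonic form in $\left(\mathcal{H}^{2,1}_{\delbar}\cap P^{2,1}\right)\oplus L\left(\mathcal{H}^{1,0}_{\delbar}\right)$ can be tested entirely inside the finite-dimensional space of invariant forms; a direct computation of the invariant $\delbar$-harmonic $(2,1)$-forms and of the invariant part of $L\left(\mathcal{H}^{1,0}_{\delbar}\right)$ then yields an explicit harmonic form that cannot be so decomposed. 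To complete your argument you would either have to solve your PDE system explicitly (with function coefficients, globally and periodically on the torus), or switch to an invariant framework such as the paper's, where the verification reduces to checking finitely many linear equations.
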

\begin{proof}
We refer to Example \ref{ex:no-2-1-decomposition} for the proof of this Proposition.
\end{proof}

First we need the following lemma that will allow us to work only with left-invariant forms.
\begin{lemma}\label{lemma:left-inv}
Let $X^6=\Gamma\backslash G$ be the compact quotient of a $6$-dimensional, connected, simply-connected Lie group by a lattice and let $(J,\omega)$ be a left-invariant almost-K\"ahler structure on $X$. Let $\eta\in A^{2,1}(X)$ be a left-invariant $(2,1)$-form on $X$ with primitive decomposition
\[
\eta=\alpha+L\beta.
\]
Then, $\alpha$ and $\beta$ are left-invariant.
\end{lemma}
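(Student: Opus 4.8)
The plan is to exploit the fact that the entire Lefschetz package attached to $(J,g,\omega)$ is left-invariant, so that the primitive decomposition of a left-invariant form automatically has left-invariant components. First I would record that $L=\omega\wedge(-)$ is left-invariant because $\omega$ is, that the associated metric $g=\omega(-,J-)$ and hence the Hodge star $*$ are left-invariant because both $\omega$ and $J$ are, and that therefore $\Lambda=-*L*$ also maps left-invariant forms to left-invariant forms. Once this is in place, the strategy is simply to recover each component of $\eta=\alpha+L\beta$ by applying these invariant operators to $\eta$ itself.

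The concrete computation I would carry out is to apply $\Lambda$ to both sides of $\eta=\alpha+L\beta$. Since $\alpha\in P^{2,1}$ is primitive one gets $\Lambda\eta=\Lambda L\beta$, and since $\beta$ is a $(1,0)$-form it is automatically primitive ($\Lambda\beta=0$ for degree reasons), so the standard commutation relation $[\Lambda,L]=(n-k)\,\id$ on $k$-forms gives, with $n=3$ and $k=1$, the clean identity $\Lambda\eta=(n-1)\beta=2\beta$. Hence $\beta=\tfrac{1}{2}\Lambda\eta$ is a left-invariant operator applied to a left-invariant form, so $\beta$ is left-invariant; and then $\alpha=\eta-L\beta$ is left-invariant as well, because $L$ preserves left-invariance and $\eta,\beta$ are left-invariant.

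I do not expect a serious obstacle here: the only point requiring genuine care is the verification that $*$, and hence $\Lambda$, really do preserve left-invariance, which rests on $g$ being left-invariant (so that left translations act by orientation-preserving isometries and commute with $*$). An alternative, more structural route I could take instead is to invoke uniqueness of the Lefschetz decomposition directly: since $L$ and $\Lambda$ preserve the finite-dimensional space of left-invariant forms $\Lambda^\bullet\mathfrak{g}^*$, the splitting $A^{2,1}(X)=P^{2,1}\oplus L(A^{1,0}(X))$ restricts to a direct-sum decomposition of left-invariant forms, and the components of the left-invariant form $\eta$ must then lie in the left-invariant summands. Either way the argument is elementary and does not use the almost-Kähler condition beyond the left-invariance of $\omega$ and $J$.
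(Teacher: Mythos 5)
Your proof is correct, and it is a genuine (if closely related) variant of the paper's argument rather than a reproduction of it. The paper works with $L$ instead of $\Lambda$: applying $L$ to $\eta=\alpha+L\beta$ kills $\alpha$ (a primitive $3$-form in real dimension $6$ satisfies $L\alpha=0$), giving $L\eta=L^2\beta$; since $L^2\colon\Lambda^1\to\Lambda^5$ is a pointwise isomorphism of the exterior algebra and is built from the left-invariant form $\omega$, one concludes that $\beta$ is left-invariant, and then $\alpha=\eta-L\beta$ is left-invariant exactly as in your argument. The practical difference lies in the prerequisites: the paper's route uses only the left-invariance of $\omega$, since both $L$ and the Lefschetz isomorphism $L^2$ on $1$-forms are purely symplectic-algebraic, whereas your route additionally requires the left-invariance of $g$ and of the Hodge star to ensure that $\Lambda=-*L*$ preserves left-invariant forms --- a true but extra verification, which you correctly identify as the point needing care. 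In exchange, your use of the commutation relation $[\Lambda,L]=(n-k)\,\id$ (a pointwise identity, valid on any almost-Hermitian manifold, so legitimate here) produces the explicit formula $\beta=\tfrac12\Lambda\eta$, and your alternative structural remark --- that $L$ and $\Lambda$ preserve the finite-dimensional space $\Lambda^\bullet\mathfrak{g}^*$ of left-invariant forms, so the whole Lefschetz decomposition restricts to that subspace --- is arguably the cleanest formulation and applies verbatim in every bidegree, not just $(2,1)$.
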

\begin{proof}
Let $\eta\in A^{2,1}(X)$ be a left-invariant $(2,1)$-form on $X$. Its primitive decomposition is
\[
\eta=\alpha+L\beta,
\]
with $\alpha\in A^{2,1}(X)$ primitive, i.e., $L\alpha=0$ and $\beta\in A^{1,0}(X)$. Notice that $\beta$ is indeed primitive for bidegree reasons. 
We apply $L$ to the decomposition and obtain
$$
L\eta=L^2\beta.
$$
Since $\omega$ is left-invariant then $L\eta$, and so $L^2\beta$, are left-invariant.
Now, since $L^2:\Lambda^1\to\Lambda^5$ is an isomorphism at the level of the exterior algebra, it follows that also $\beta$ is left-invariant. As a consequence, since $L\beta$ and $\eta$ are left-invariant, it follows that also $\alpha$ is left-invariant.
\end{proof}

\begin{ex}\label{ex:no-2-1-decomposition}

Let $X$ be the Iwasawa manifold defined as the quotient $X:=\Gamma\backslash\mathbb{H}_3$ where
$$
\mathbb{H}_3:=\left\lbrace
\left[\begin{matrix}
1 & z_1 &   z_3 \\
0 & 1   & z_2\\
0 & 0 & 1\\
\end{matrix}\right]
\mid z_1,z_2,z_3\in\mathbb{C}
\right\rbrace
$$
and
$$
\Gamma:=\left\lbrace
\left[\begin{matrix}
1 & \gamma_1 &   \gamma_3 \\
0 & 1   & \gamma_2\\
0 & 0 & 1\\
\end{matrix}\right]
\mid \gamma_1,\gamma_2,\gamma_3\in\mathbb{Z}[\,i\,]
\right\rbrace\,.
$$
Then, setting $z_j=x_j+iy_j$, there exists a basis of
left-invariant $1$-forms $\left\lbrace e_i\right\rbrace$ on $X$ given by
$$
\left\lbrace
\begin{array}{lcl}
e^1 & =& dx_1\\
e^2 & = & dy_1\\
e^3 & = & dx_2\\
e^4 & = & dy_2\\
e^5 & = & dx_3-x_1dx_2+y_1dy_2\\
e^6 & = & dy_3-x_1dy_2-y_1dx_2
\end{array}
\right.\,.
$$
The following structure equations hold
$$
\left\lbrace
\begin{array}{lcl}
de^1 & =& 0\\
de^2 & = & 0\\
de^3 & = & 0\\
de^4 & = & 0\\
de^5 & = & -e^{13}+e^{24}\\
de^6 & = & -e^{14}-e^{23}
\end{array}
\right.\,.
$$
Let us consider the non integrable left-invariant almost-complex structure $J$ given by
\[
\phi^1=e^1+ie^6,\ \ \ \phi^2=e^2+ie^5,\ \ \ \phi^3=e^3+ie^4
\]
being a global coframe of $(1,0)$-forms. 
By a direct computation the structure equations become (cf. also \cite{tardini-tomassini-dim6})
\begin{align*}
4\,d\phi^1&=-\phi^{13}-i\phi^{23}+\phi^{1\bar3}+\phi^{3\bar1}-i\phi^{2\bar3}+i\phi^{3\bar2}+\phi^{\bar1\bar3}-i\phi^{\bar2\bar3},\\
4\,d\phi^2&=-i\phi^{13}+\phi^{23}-i\phi^{1\bar3}+i\phi^{3\bar1}-\phi^{2\bar3}-\phi^{3\bar2}-i\phi^{\bar1\bar3}-\phi^{\bar2\bar3},\\
d\phi^3&=0.
\end{align*}
Endow $(X,J)$ with the left-invariant almost-K\"ahler structure given by
\[
\omega=2(e^{16}+e^{25}+e^{34})=i(\phi^{1\bar1}+\phi^{2\bar2}+\phi^{3\bar3}).
\]


We want to find an element $\eta\in A^{2,1}(X)$ which is contained in $\mathcal{H}^{2,1}_{\delbar}$ but is not contained in 
\[
\left(\mathcal{H}^{2,1}_{\delbar}\cap P^{2,1}\right)\oplus L\left(\mathcal{H}^{1,0}_{\delbar}\right).
\]
Thanks to Lemma \ref{lemma:left-inv}
it is sufficient to work with left-invariant forms. Indeed
if we find
 $\eta\in \mathcal{H}^{2,1}_{\delbar}$ left-invariant that cannot be decomposed as $\eta=\alpha+L\beta$, with $\alpha\in\mathcal{H}^{2,1}_{\delbar}\cap P^{2,1}$ and $\beta\in\mathcal{H}^{1,0}_{\delbar}$, both left-invariant forms, then $\eta\notin \left(\mathcal{H}^{2,1}_{\delbar}\cap P^{2,1}\right)\oplus L\left(\mathcal{H}^{1,0}_{\delbar}\right).$

A long, but direct and straightforward computation, shows that the space of left-invariant $\delbar$-harmonic $(2,1)$-forms is
\[
\C<\phi^{13\bar1}+\phi^{23\bar2},\,
\phi^{13\bar2}+\phi^{23\bar1}-2i\phi^{23\bar2},\,
\phi^{13\bar3}+\phi^{23\bar3}>,
\]
while it is immediate to see that the space of left-invariant forms which are contained in $L\left(\mathcal{H}^{1,0}_{\delbar}\right)$ is
\[
\C<\phi^{13\bar1}+\phi^{23\bar2}>.
\]
Since, for instance, $L(\phi^{13\bar2}+\phi^{23\bar1}-2i\phi^{23\bar2})=-2iL(\phi^{23\bar2})\neq0$, it means that $\phi^{13\bar2}+\phi^{23\bar1}-2i\phi^{23\bar2}$ is not primitive. Therefore $\phi^{13\bar2}+\phi^{23\bar1}-2i\phi^{23\bar2}$ is a left-invariant, $\delbar$-harmonic $(2,1)$-form, but it is not contained in 
\[
\left(\mathcal{H}^{2,1}_{\delbar}\cap P^{2,1}\right)\oplus L\left(\mathcal{H}^{1,0}_{\delbar}\right).
\]
\end{ex}

\end{document}